\newcommand{\DistTo}{\xrightarrow{
   \,\smash{\raisebox{-0.65ex}{\ensuremath{\scriptstyle\sim}}}\,}}
\newcommand{\R}{\mathbb R}
\newcommand{\N}{\mathbb N}
\newcommand{\Z}{\mathbb Z}
\newcommand{\C}{\mathbb C}
\newcommand{\Q}{\mathbb Q}
\newcommand{\PGa}{PGL_2(\mathbb C)}
\newcommand{\lesss}{\rotatebox[origin=c]{90}{$\land$}}
\newcommand{\less}{\ \lesss\ }
\newcommand{\biggg}{\rotatebox[origin=c]{90}{$\lor$}}
\newcommand{\bg}{\ \biggg\ }
\newcommand{\nc}{\newcommand}
\nc{\BCc}{{\mathbb{C}(\wp(z),\wp^\prime(z))}}
\nc{\BC}{{\mathbb C}}
\nc{\BQ}{{\mathbb Q}}
\nc{\BR}{{\mathbb R}}
\nc{\BZ}{{\mathbb Z}}
\nc{\BP}{{\mathbb P}}
\nc{\BN}{{\mathbb N}}
\nc{\BM}{{\mathbb M}}
\nc{\fH}{{\mathfrak{H}}}
\nc{\vp}{{\varepsilon}}\nc{\dpar}{{\partial}}\nc{\al}{{\alpha}}
\nc{\PSL}{PSL(2,\BR)}
\nc{\PS}{PSL(2,\BZ)}
 \nc{\CL}{PSL(2,\BZ/m\BZ)}
 \newtheorem{theorem}{Theorem}[section]
\newtheorem{corollary}[theorem]{Corollary}
\newtheorem{lemma}[theorem]{Lemma}
\newtheorem{proposition}[theorem]{Proposition}
\theoremstyle{definition}
\newtheorem{definition}[theorem]{Definition}
\newtheorem{remark}[theorem]{Remark}
\newtheorem{example}[theorem]{Example}
\newcommand{\theoref}[1]{Theorem~\ref{#1}}
\newcommand{\lemref}[1]{Lemma~\ref{#1}}
\DeclareMathOperator{\Id}{Id}
\numberwithin{equation}{section}  
\begin{document}
\title{Frobenius determinants and Bessel Functions}
\author{Ahmed Sebbar and Oumar Wone}
\address{Ahmed Sebbar and Oumar Wone\\
Univ. Bordeaux, IMB 
   \\UMR 5251, F-33400 Talence, France.}
    \email{ahmed.sebbar@math.u-bordeaux.fr}
      \email{oumar.wone@math.u-bordeaux.fr}
\date{}
\begin{abstract}
We study the geometry and partial differential equations arising from the consideration of Frobenius determinants, also called-group-determinants. This leads us to address some aspects of twistor theory as well as some extensions of Bessel functions.
\end{abstract}
\maketitle 
\tableofcontents

\section{Introduction}
Let $G$ be a finite group, and $A$ be a commutative  $\mathbb K$-algebra, where $\mathbb K$ is a field. We fix some elements $(x_{g\in G})$ in $A$, indexed by $G$.
Dedekind and Frobenius considered  the group  determinant
\[\Theta_G(x)= {\det}(x_{gh^{-1}}),\quad x= (x_{g\in G})\]
This is a homogeneous polynomial in the $x_g$'s of degree the order of $G$, with  integer coefficients. We refer to \cite{conrad} for a nice exposition of the history of this determinant. We will be mainly concerned with cyclic group $G$, that we assume to be $\displaystyle \BZ/n\BZ   $ which amounts to circulant matrices and circulant determinants. Two fundamental group determinants are of interest for us in this work: the case of $A= {\mathbb K}[X_1,\cdots, X_n]$  the algebra of polynomials in $ X_1,\cdots, X_n$ with coefficients in $ \mathbb K $ and the algebra
$A= {\mathbb K}[\partial_1,\cdots, \partial_n]$ where $\partial_i= \dfrac{\partial}{\partial X_i }  $. The case of $n=2$ is particularly simple and illuminating. This corresponds to the cases of circle and the Euclidean Laplacian:

\begin{equation}\label{circle}
\begin{vmatrix} x & iy \\  \\ iy & x \end{vmatrix}= x^2+y^2,\quad \begin{vmatrix} \dfrac{\partial}{\partial x} & i\dfrac{\partial}{\partial x} \\ \\ i\dfrac{\partial}{\partial x} &\dfrac{\partial}{\partial x}  \end{vmatrix}= \dfrac{\partial^2}{\partial x^2}+\dfrac{\partial^2}{\partial y^2}
\end{equation}

and the case of the hyperbola and the wave operator:

\begin{equation}\label{hyperbola}
\begin{vmatrix} x & y \\  \\ y & x \end{vmatrix}= x^2-y^2,\quad \begin{vmatrix} \dfrac{\partial}{\partial x} & \dfrac{\partial}{\partial x} \\ \\ \dfrac{\partial}{\partial x} &\dfrac{\partial}{\partial x}  \end{vmatrix}= \dfrac{\partial^2}{\partial x^2}-\dfrac{\partial^2}{\partial y^2}.
\end{equation}

The group $\displaystyle \BZ/n\BZ   $  is an abelian group so its irreducible representations are all of degree one. If $\displaystyle \omega= \omega_n= e^{\frac{2i\pi}{n}}    $, the 
$n$ characters are given by
\[\chi_r(s)= \omega_n^{rs},\; s\in \BZ/n\BZ , \;r\in \{0,1,\cdots, n-1\}\]
so that

\begin{equation}\label{determinant}
\Theta_{\BZ/n\BZ}(X)=\prod_{j=0}^{n-1}\left(\sum_{k=0}^{n-1}\omega ^{jk}   X_k\right)= \prod_{j=0}^{n-1}\left(X_0+\omega^jX_1+\cdots+\omega^{(n-1)j}X_{n-1} \right)
\end{equation}

This can be seen as generalized  Brahmagupta's identity, for when $G= \BZ/2\BZ$, we obtain the group structure on the hyperbola $x^2-dy^2=1, d>0   $ or,
what is the same
\[(x_1^2-dy_1^2)(x_2^2-dy_2^2)=(x_1x_2+dy_1y_2)^2-d( x_1y_2+x_2y_1)^2\]

The equation \eqref{determinant}, when the variables are partial derivations $\displaystyle \dfrac{\partial}{\partial x_k},\,1\leq k\leq n   $, gives rise to a differential operator 
\[{\mathcal D}= \prod_{j=1}^{n}\left(\sum_{k=1}^{n}\omega ^{jk} \dfrac{\partial}{\partial x_i} \right)= \prod_{j=1}^{n}\left(\dfrac{\partial}{\partial x_1}+\omega^j\dfrac{\partial}{\partial x_2}+\cdots+\omega^{(n-1)j}\dfrac{\partial}{\partial x_n} \right) ,\]
 which by the change of variables 
 \[u_j=  x_1+\omega^j x_2+\cdots+\omega^{(n-1)j} x_n\]
 becomes 
 \[D= \prod_{j=1}^{n} \dfrac{\partial}{\partial u_j}.\]
 which is a separable differential operator.
We restrict ourselves in this work to the case $n = 2$ and then $G= \BZ/2\BZ$, and more particularly on harmonic functions. This case is already rich enough to make a link with twistors theory, and Bessel functions appear. These features have many facets, worthy of interest, and we explore a few. This gives us the opportunity to show the influence of group theory in the context of classical Bessel functions, emphasizing the operator side and also some aspects of differential Galois theory. The group aspect is developed in \cite{vilenkin}, \cite{miller}, \cite{talman}. One can also consider harmonic functions in the three dimensional case, even it is not covered by Frobenius determinant, which is
\[\frac{\partial^3}{\partial x^3}+\frac{\partial^3}{\partial y^3}+\frac{\partial^3}{\partial z^3}-3\frac{\partial}{\partial x}\frac{\partial}{\partial y}\frac{\partial}{\partial z},\]
which differs from the Euclidean three dimensional Laplacian.

To not be too long, we hope to return on it on another occasion for a more general study. We recall that Frobenius's determinant, in its simplest form, ie $ n = 2 $ gives the Laplacian or wave operator in two dimensions. To emphasize the link with harmonic functions, we discuss the most general representation of harmonic functions, initially given by Whittaker and Bateman, in the form of John transforms. We dedicate enough time to present, in a self contained manner, John transform of certain distributions associated with $\bf x _ + ^ a $ which are then given by hypergeometric functions, and thus, are multivalued functions on ${\mathbb P}^1\setminus \{0,1,\infty\}$. The uniformization and  modular function $\lambda$ allow us to present these John transforms as uniform functions on the upper half-plane $\mathcal H$.

The origin of integral transforms can be traced back to Radon in 1917. He studied transforms of functions $f:\R^2\to\R$ with decay conditions at infinity. More precisely to $L\subset \R^2$, an oriented line, one associates
 $$\displaystyle\phi(L):=\int_Lf.$$
 Remark that there exists an inversion formula for the Radon transform, see \cite{gelfand2000, radon1917}. The next important study of integral transforms is due to F. John in $1938$ \cite{gelfand2000, john1938}. He was interested in integral transforms of functions $f:\R^3\to\R$ (satisfying appropriate conditions which make the integrals well-defined). He defined, for $L$ an oriented line, the transform
 $$\displaystyle\phi(L):=\int_Lf$$
 or again
 $$\displaystyle\phi(\alpha_1,\alpha_2,\beta_1,\beta_2)=\int_{-\infty}^{+\infty}f(\alpha_1s+\beta_1,\alpha_2s+\beta_2,s)ds.$$
 The space of oriented lines in $\R^3$ is four dimensional and $3<4$ so one excepts a condition on $\phi$. Indeed if we differentiate under the integral sign we get the ultrahyperbolic wave equation
 $$\displaystyle\dfrac{\partial^2\phi}{\partial\alpha_1\partial\beta_2}=\dfrac{\partial^2\phi}{\partial\alpha_2\partial\beta_1}.$$
 Changing coordinates $\alpha_1=x+y,\,\alpha_2=t+z,\,\beta_1=t-z,\,\beta_2=x-y$, the ultrahyperbolic wave equation becomes
 $$\displaystyle\dfrac{\partial^2\phi}{\partial x^2}+\dfrac{\partial^2\phi}{\partial z^2}-\dfrac{\partial^2\phi}{\partial y^2}-\dfrac{\partial^2\phi}{\partial t^2}=0.$$ 
 Finally in 1967, R. Penrose \cite{penrose1967} introduced the twistor transform in order to find solutions of the wave equation in Minkowski space. More precisely the solutions of the equation 
 $$\dfrac{\partial^2\phi}{\partial t^2}=\dfrac{\partial^2\phi}{\partial x^2}+\dfrac{\partial^2\phi}{\partial y^2}+\dfrac{\partial^2\phi}{\partial z^2}$$
 are expressed in the form
 $$\displaystyle\phi(x,y,z,t)=\int_{\Gamma\subset\mathbb{P}^1(\C)}f((z+t)+(x+iy)\lambda,(x-iy)-(z-t)\lambda,\lambda)d\lambda.$$
Here $\Gamma\subset\mathbb{P}^1(\C)$ is a closed contour and the function $f$ is holomorphic on $\mathbb{P}^1(\C)$ except for some number of poles. This marked the introduction of sophisticated methods such as cohomology in the study of such questions.

\section{The John transform and Uniformization}
\subsection{The Cross-ratio and the modular function $\lambda$ }We show in this section that the John transform of the distribution $ {\bf x}^{a}_+$ is a hypergeometric function, which we know has an analytic continuation, as a multivalued function,  to the universal covering $\widetilde{{\mathbb P}^1\setminus \{0,1,\infty\}}$, $ {\mathbb P}^1= {\mathbb P}^1(\C)$. We recall that the analytic continuation of the hypergeometric function $F(a,b,c;x)$ is multivalued on $\mathbb{P}^1\setminus\{0,1,\infty\}$. Now in order to show the richness of the John transform we show that the transform of $ {\bf x}^+$ (see below) has very interesting modular properties, due to its link with the modular function $\lambda$. This is in essence a representation of hypergeometric function in terms of the even Jacobi theta zero-value(Nullwerte). We give a self-contained presentation of this idea.

 There is a deep link between cross-ratios and Uniformization \cite{donaldson}. Let $a_1, a_2, a_3, a_4 \in (\mathbb {P}^1\times\mathbb {P}^1\times\mathbb {P}^1\times\mathbb {P}^1)$   be four different points on the projective line. 
 The cross-ratio $[a_1, a_2, a_3,a_4]$ is defined as
\[\lambda= [a_1, a_2, a_3,a_4]:=  \frac{(a_1-a_3)(a_2-a_4)}{ (a_1-a_4)(a_2-a_3)}.\]
If one of the $a_i$ is $\infty$,  the factors containing it cancel each other out, for example if $a_4=\infty$
\[[a_1, a_2, a_3,a_4]:= \frac{a_1-a_3}{a_2-a_3}.\]
 The cross-ratio is $\PGa$-invariant. Let $\Delta$ be the set of 4-tuples of distinct points in $\BP^1$ with at least two points equal, and 
 \[X={(\BP^1)}^4\setminus \Delta.\]
 The cross-ratio defines a bijective map
 \[R: X/\PGa   \longrightarrow  \BC\setminus \{0,1\}.\]
 Let $ \Gamma(2)$ be the congruence subgroup of the modular group
 \[\Gamma(2)= \left\{\begin{pmatrix}a&b\\c&d  \end{pmatrix} \in {\rm SL}_2(\BZ): a,d\equiv 1({\rm mod}\, n), \,b,c \equiv 0({\rm mod}\, n)\right\}.\]
 It acts on the upper half-plane $\displaystyle {\mathcal H}= \left\{z\in \BC, \Im z>0   \right\} $ by projective transformation
 \[\begin{pmatrix}a&b\\c&d  \end{pmatrix} \in \Gamma(2),\quad  z\in \BC \longrightarrow \frac{az+b}{cz+d}.\] 
 With this action we have a natural bijection between the set of ordered 4-tuples of distinct points in $\BP^1  $ modulo projective transformations and the points on
  $\displaystyle  {\mathcal H}/\Gamma(2) $ \cite{dolgachev} (p.94).\\
  
 Cross-ratios have the following obvious, but important properties: they are invariant under Moebius transformations
and under the action of the Klein four group $V_4 < A_4$ (alternating group) acting via permutation of the components with
\[V_4= \{{\rm Id}, (12)(34), (13)(24), (14)(23)\}.\]
The Klein group $V_4$  is a normal subgroup of $S_4$, the group of permutations of four elements and $V_4 \cap S_3= {\rm Id}$. The group $S_3$ can be identified
with the subgroup of $S_4$ of permutations of four letters fixing one fixed letter. Hence $V_4 S_3= S_4.   $. Each permutation in $S_4$ 
\[\sigma= \begin{pmatrix}
1&2&3&4\\
\alpha_1&\alpha_2&\alpha_3&\alpha_4\\
\end{pmatrix}\]
induces a map on cross-ratios:
\[\lambda= [a_1, a_2, a_3,a_4] \longrightarrow \lambda_{\sigma}= [a_{\sigma(1)}, a_{\sigma(2)}, a_{\sigma(3)},a_{\sigma(4)}]\] 
We thus obtain a group $G$ of fractional linear transformations and the map of $S_4$ onto $G$ is a morphism with the Klein four group $V_4$ as kernel. Hence the action of $S_4$ on $\BP^1\setminus \{0,1,\infty\} $ reduces to the action of $S_3=\left\{\tau, \sigma, \tau^2= \sigma^3=(\tau \sigma)^2=1 \right\}$ on the same space. So given $t\in \BP^1\setminus \{0,1,\infty\} $, the different six images are
\[{\rm Id}: t \to t;\;\tau: t \to \frac{1}{t};\;\sigma: t \to \frac{1}{1-t};\;\sigma^2: t \to1- \frac{1}{t};\;\tau\sigma: t \to 1-t;\;\tau \sigma^2: t \to \frac{t}{t-1}.\]

\noindent Furthermore any cubic polynomial can be reduced to the Weierstrass normal form
  \[p(z)= 4z^3-c_2z-c_3, \quad c_1,c_2\in \BC\]
 by means of a transformation $ z\mapsto az+b,\; a,b\in \BC, a\neq 0$. If  $e_1, e_2, e_3$  are the roots of $p(z)$, the discriminant is defined by
 \[\Delta_p= 16(e_1-e_2)^2(e_2-e_3)^2(e_3-e_1)^2\]
 which is not equal to zero if and only of these roots are distinct. Between the roots and the coefficients of the cubic polynomial we have
 \begin{align*}
 e_1+e_2+e_3= &0\\
  e_1e_2+e_2e_3+e_3e_1= & -\frac{c_2}{4}\\
  e_1e_2e_3= &\frac{c_3}{4}
 \end{align*}
Moreover  the discriminant is given by
\[\Delta_p=c_2^3-27c_3^2.\]
A classical and important result in elliptic functions theory \cite[p.~287, cor.~6.5.8]{singerman} is that if $c_2, c_3,\in \BC, c_2^3-27c_3^2\neq 0$ are given, then there is a lattice 
\[\Omega= \left\{2m\omega_1+2n\omega_3,\;m,n\in \BZ   \right\}\]
with $\displaystyle \omega_1\neq 0,\;  \omega_3\neq 0,\;   \frac{\omega_1}{\omega_3}\notin \BR   $ such that
\[ g_2= g_2(\Omega)= 60 \sum\nolimits'_{\omega\in \Omega} \omega^{-4}\]
and
\[g_2= g_2(\Omega)= 140 \sum\nolimits'_{\omega\in \Omega} \omega^{-6}.\]
We define 
\[\Delta(\Omega)= g_2^3-27g_3^2\]
and the Weierstrass $\wp$-function
\[\wp(u)= \frac{1}{u^2}+ \sum\nolimits'_{\omega\in \Omega}\left(\frac{1}{(u-\omega)^2}-\frac{1}{\omega^2}\right).\]
This function satisfies the nonlinear differential equation
\[\wp'{^2}(u)= 4\wp^3(u)-g_2\wp(u)-g_3\]
This means that if $z= \wp(u) $, then
\[\left(\frac{d z}{d u}\right)^2= 4z^3-g_2z-g_3\]
so that
\[\wp^{-1}(z)= u= \int \frac{dz}{\sqrt{p(z)}}.\]
The function $\wp$ is an even elliptic function satisfying to
\[ \wp'(\omega_1)=  \wp'(\omega_2)=  \wp'(\omega_3)=0\]
and for $j=1,2,3$:
\[e_j= \wp(\omega_j)= \omega_j^{-2}+\sum\nolimits'_{\omega\in \Omega}\left(\frac{1}{(\omega_j-\omega)^2}-\frac{1}{\omega^2}\right)\]

The periods $2\omega_1, 2\omega_3$ and $2\omega_3= 2\omega_1+ 2\omega_3$ can be recovered by
\\
\begin{align*}
\omega_1=&\frac{1}{2}\int_{e_1}^{\infty} \frac{dt}{\sqrt{(t-e_1)(t-e_2)(t-e_3)}}=& \frac{1}{2}\int_{e_3}^{e_2}\frac{dt}{\sqrt{(t-e_1)(t-e_2)(t-e_3)}}\\
\\
\omega_2=&\frac{1}{2}\int_{e_2}^{\infty}\frac{dt}{\sqrt{(t-e_1)(t-e_2)(t-e_3)}}=&\frac{1}{2} \int_{e_3}^{e_2}\frac{dt}{\sqrt{(t-e_1)(t-e_2)(t-e_3)}}\\
\\
\omega_3=&\frac{1}{2}\int_{e_3}^{\infty}\frac{dt}{\sqrt{(t-e_1)(t-e_2)(t-e_3)}}=&\frac{1}{2} \int_{e_3}^{e_2}\frac{dt}{\sqrt{(t-e_1)(t-e_2)(t-e_3)}}.
\end{align*} 
\\
These integrals are hypergeometric functions in disguise. They can be brought to the Riemann normal form by the change of variables:
\[u= e_3+\frac{e_1-e_3}{z^2}\]
which gives
\[K= \sqrt{e_1-e_3} \omega_1,\quad iK'= \sqrt{e_1-e_3} \omega_3,\]
where
\begin{equation}\label{periods!}
K= \int_0^1 \frac{dz}{\sqrt{(1-z^2)(1-k^2z^2)}},\quad K= \int_0^1 \frac{dz}{\sqrt{(1-z^2)(1-k'{^2}z^2)}},
\end{equation}
and
\[ k^2=\frac{e_2-e_3}{e_1-e_3},\quad k'{^2}=\frac{e_2-e_1}{e_3-e_1}.\]
The classical Gauss hypergeometric function is defined by the power series for $\vert z\vert<1$:
\begin{equation}\label{hypergeometric1}
\setlength\arraycolsep{1pt}
{}_2 F_1\left(a,b;c;z\right)= \sum_{n=0}^{\infty} \frac{(a)_n}{(b)_n} \frac{z^n}{n!},\quad (a)_0=1,\;\; (a)_n= a(a+1)\cdots(a+n-1). 
\end{equation}
which admits the integral representation
\begin{equation}\label{hypergeometric2}
\setlength\arraycolsep{1pt}
{}_2 F_1\left(a,b;c;z\right)= \frac{\Gamma(c)}{\Gamma(b)\Gamma(c-b)}\int_0^1t^{b-1}(1-t)^{c-b-1}(1-zt)^{-a}\,dt.
\end{equation}
This function satisfies the following differential equation of the second order
\begin{equation}\label{hypergeo}
z(1-z)\frac{d^2y}{dz^2}+\left(c-(a+b+1)z\right)\frac{dy}{dz}-ab y=0
\end{equation}
An crucial point \cite[$\S$.~22.301, p.~499]{WW} and \cite[p.~268]{donaldson} is that the periods $K,\;K'$ are hypergeometric functions with respect to the variable $k^2$ and verify a hypergeometric differential equation, which fits into a general theory of Picard-Fuchs equations and Gauss-Manin connections. Indeed
\[K= \frac{\pi}{2}\; {}_2 F_1\left( \frac{1}{2}, \frac{1}{2};1,k^2\right),\quad K'=\frac{\pi}{2}\; {}_2 F_1\left( \frac{1}{2}, \frac{1}{2};1,k'{^2}\right)\]
and the general solution of the differential equation
\[z(1-z)\frac{d^2y}{dz^2}+\left(1-2z\right)\frac{dy}{dz}-\frac{1}{4}y=0\]
has the form $\alpha K(z)+ \beta K'(z),\;z= k^2  $
This has the following consequence: the cross-ratio
\[ k^2=\frac{e_2-e_3}{e_1-e_3}\]
is, by homogeneity, a function of $\tau$ alone. We may assume $\Im \tau>0  $. we obtain a modular function $\lambda(\tau)= k^2= k^2(\tau)$ with respect to congruence subgroup $\Gamma(2)$ of the modular group ${\rm SL}_2(\BZ) $. It can be shown that actually we have an isomorphism:
\[ \lambda: {\mathcal H}/\Gamma(2) \DistTo   \BC\setminus\{0,1\}\]
and the the inverse function may be expressed as a quotient of hypergeometric functions
\begin{equation}\label{hypergeo}
z= i \frac{{}_2 F_1\left( \frac{1}{2}, \frac{1}{2};1,k'{^2}\right)}{{}_2 F_1\left( \frac{1}{2}, \frac{1}{2};1,k^2\right)}.
\end{equation}
The interplay between the geometry of the cross-ratio, the structure of the groups $ \Gamma (2) $ and that of the permutations $ S_4 $ of four elements leads to a fascinating and elegant formula for the hypergeometric function. We start with the identity \eqref{hypergeometric2}
\begin{equation*}
\frac{\Gamma(b)\Gamma(c-b)}{\Gamma(c)}{}_2 F_1\left(a,b;c;z\right)= \int_0^1t^{b-1}(1-t)^{c-b-1}(1-zt)^{-a}\,dt= \int_0^1 U(t)\,dt\\
\end{equation*}
If 
\[t=[vd_3d_1d_2],\quad z=[a_4a_3a_2a_1]\]
then the desired formula is:
\begin{align}\label{hypergeometric3}
\frac{\Gamma(b)\Gamma(c-b)}{\Gamma(c)}{}_2 F_1&\left(a,b;c;[a_4a_3a_2a_1]\right)
\\
\nonumber= \int_0^1&[va_3a_1a_2]^{b-1}(va_1a_3a_2)^{c-b-1}(va_1a_4a_2)^{-a}\,d[va_3a_1a_2].
\end{align}
\begin{proposition}
If $v=\wp(u)= \wp(u, 2\omega_1, 2\omega_3),\;d_1=\infty,\;d_2=e_3,\;d_3=e_1,\;d_4=e_2$ with $e_1+e_2+e_3=0$ then the equality \eqref{hypergeometric3} becomes
\begin{align}
\frac{1}{2}\frac{\Gamma(b)\Gamma(c-b)}{\Gamma(c)}{}_2 F_1&\left(a,b;c;\frac{e_2-e_3}{e_1-e_3}\right)\\
\nonumber&= \int_0^{\omega_1}\left(\wp(u)-e_1\right)^{c-b-\frac{1}{2}} \left( \wp(u)-e_2\right)^{\frac{1}{2}-\alpha} \left( \wp(u)-e_3\right)^{\alpha-\gamma+\frac{1}{2}}\;du
\end{align}
\end{proposition}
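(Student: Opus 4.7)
The plan is to substitute directly into the general cross-ratio identity \eqref{hypergeometric3} and then exploit the fact that, under $v=\wp(u)$, the Weierstrass differential equation $\wp'{}^{2}=4(\wp-e_1)(\wp-e_2)(\wp-e_3)$ expresses the Jacobian $dv=\wp'(u)\,du$ in exactly the form needed to produce the three half-integer powers of $(\wp-e_j)$ that appear on the right-hand side of the claim.

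Concretely, I would first evaluate the four cross-ratios in \eqref{hypergeometric3} using the convention that in $[a_1,a_2,a_3,a_4]$ the factors containing $\infty$ cancel. With $(d_1,d_2,d_3,d_4)=(\infty,e_3,e_1,e_2)$ this gives
\begin{align*}
t &= [v,e_1,\infty,e_3] = \tfrac{e_1-e_3}{v-e_3}, & 1-t &= [v,\infty,e_1,e_3] = \tfrac{v-e_1}{v-e_3}, \\
z &= [e_2,e_1,e_3,\infty] = \tfrac{e_2-e_3}{e_1-e_3}, & 1-zt &= [v,\infty,e_2,e_3] = \tfrac{v-e_2}{v-e_3}.
\end{align*}
In particular the argument of ${}_2F_1$ produced this way is exactly the cross-ratio $k^2=(e_2-e_3)/(e_1-e_3)$ of the proposition, and a short computation yields $dt=-(e_1-e_3)(v-e_3)^{-2}\,dv$.

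Next I would change variable via $v=\wp(u)$. For $u\in(0,\omega_1)$ on the real axis, $\wp$ decreases monotonically from $+\infty$ to $e_1$, so $t$ traverses $[0,1]$ exactly once, $\wp'(u)<0$, and all three differences $\wp-e_j$ are positive, fixing the principal branch of the non-integer powers unambiguously. The Weierstrass relation gives $|\wp'(u)|=2\sqrt{(\wp-e_1)(\wp-e_2)(\wp-e_3)}$; the two minus signs from $dt/dv<0$ and $\wp'<0$ cancel upon orienting $u:0\to\omega_1$, leaving an overall factor of $2$. Adding the exponents of $(\wp-e_j)$ contributed by $(1-t)^{c-b-1}$, $(1-zt)^{-a}$, $t^{b-1}$, the $(v-e_3)^{-2}$ from $dt$, and the square-root Jacobian, one obtains $(\wp-e_1)^{c-b-\tfrac12}(\wp-e_2)^{\tfrac12-a}(\wp-e_3)^{a-c+\tfrac12}$, exactly the integrand of the statement (with $\alpha=a,\gamma=c$), while the factor $2$ from $|\wp'|$ becomes the $\tfrac12$ in front of ${}_2F_1$.

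The main obstacle is the simultaneous bookkeeping of signs, branches, and a residual numerical prefactor. Three minus signs must be followed through coherently, and the three non-integer powers require a consistent branch choice; restricting to $u\in(0,\omega_1)$, where each $\wp-e_j$ is a positive real, is what makes this choice canonical. A more delicate point is that the substitution produces a residual factor $(e_1-e_3)^{b}$ coming from $t=(e_1-e_3)/(v-e_3)$ together with $dt$; this factor is normalized away by the Weierstrass homogeneity $\wp(\lambda u;\lambda\Omega)=\lambda^{-2}\wp(u;\Omega)$, e.g.\ by rescaling the lattice so that $e_1-e_3=1$, after which the identity reduces exactly to the form given in the proposition.
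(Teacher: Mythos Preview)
Your proposal is correct and follows exactly the route the paper intends: the proposition is stated there as a direct specialization of \eqref{hypergeometric3}, with no separate proof written out, so the computation you outline---evaluating the four cross-ratios, changing $v\mapsto\wp(u)$, and absorbing the Jacobian $\wp'(u)\,du=2\sqrt{(\wp-e_1)(\wp-e_2)(\wp-e_3)}\,du$ into the exponents---\emph{is} the paper's argument. Your observation about the residual factor $(e_1-e_3)^{b}$ is also exactly right: the paper silently adopts the normalization $e_1-e_3=1$ a few lines later when passing to the theta-function (Wirtinger) form, which is what makes the stated identity hold as written.
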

\subsection{The distribution $ {\bf x}^{a}_+$ } If $a$ is a complex number with $\Re a>-1$, the function
\[{\bf x}^{a}_+= x^a, \;\;{\rm if}\;\; {\bf x}^{a}_+= 0 , \;\;{\rm if}\;\;  x\leq 0\]
is locally integrable so it defines a distribution. We give here only a hint and refer to \cite{hormander}, \cite{shilov}, \cite{kashiwara} for an extensive study from different sides. This distribution has two properties
\begin{equation}\label{equation1}
x{\bf x}^{a}_+= {\bf x}^{a+1}_+ \;\;{\rm if}\;\; \Re a>-1
\end{equation}
and
\begin{equation}\label{equation2}
\frac{d}{dx}{\bf x}^{a}_+= a{\bf x}^{a-1}_+  \;\;{\rm if}\;\; \Re a>0
\end{equation}
If $\Re a>0,\;{ x}_+^a$ is actually a continuous function, so it defines an hyperfunction (hence a distribution) and can be written as boundary values of holomorphic functions \cite{kashiwara} (p.83):
\begin{align*}
{\bf x}^{a}_+&= \frac{1}{e^{-i\pi \lambda}-e^{i\pi \lambda}}\left\{e^{-i\pi \lambda}(x+i0)^{\lambda}- e^{i\pi \lambda} (x-i0)^{\lambda}    \right\}\\
&=  \frac{1}{-2i\sin \pi \lambda}\left\{e^{-i\pi \lambda}(x+i0)^{\lambda}- e^{i\pi \lambda} (x-i0)^{\lambda}    \right\}.
\end{align*}
Before giving an outline of the analytical properties of the distribution we note that the $\Gamma$ function is a significant example of what we have in sight because
\[\Gamma(a)= \int_0^{\infty}e^{-x}x^{a-1}\,dx= \langle  {\bf x}^{a-1}_{+}, e^{-t} \rangle\]
and $a\mapsto \Gamma(a)  $ extends to a meromorphic function in $\BC$ with simple poles at $0,-1,-2,\cdots  $ with residues equal to $\displaystyle \frac{(-1)^n}{n!}  $. The methods leading to these results for this specific example work for a general study. To give an idea consider a function $\displaystyle \phi \in \mathcal{C}_0^{\infty}(\BR)  $, i.e. an  infinitely differentiable function with compact support, then for  any integer $N\geq 0$ and the Taylor coefficients $\displaystyle a_n= \frac{\phi^{(n)}(0)}{n!}$
\begin{align*}
\langle {\bf x}^{a-1}_{+}, \phi(t)\rangle&= \int_0^{\infty}\phi(x)x^{a}\,dt= \int_0^1 \phi(x)x^{a}\,dt+\int_1^{\infty}\phi(x)x^{a}\,dx\\
&=\int_0^{1}\left(\phi(x)-\sum_{n=0}^{N-1}a_nx^n\right)x^{a}\,dx+\sum_{n=0}^{N-1} \frac{a_n}{n+a}+\int_1^{\infty}\phi(x)x^{a}\,dx.
\end{align*}
Now from $\displaystyle  \phi(x)-\sum_{n=0}^{N-1}a_nx^n= {\rm O}(x^{N}) $, we deduce that the first integral define an analytic function in the right half-plane $\{\Re a>-N-1\}$,
the finite sum represents a meromorphic function with simple pole at  each $-n, 0\leq n\leq N-1$ and of residue $\displaystyle a_n$ and finally the last integral is an entire function. This argument shows that  for $\displaystyle  \phi \in \mathcal{C}_0^{\infty}(\BR)$, $\langle {\bf x}^{a-1}_{+}, \phi(t)\rangle $ is meromorphic in $\BC$ with simple pole at $-n$ for every non-negative integer $n$ and the corresponding residue is $\displaystyle a_n= \frac{\phi^{(n)}(0)}{n!}$. This argument is also valid for every
infinitely differentiable function $\phi$, of rapid decay at $+\infty$, as in the case of the $\Gamma$-function.

\subsection{The John transform of  ${\bf x}^{a}_{+}$}
Let $f\in {\mathcal S}(\BR^3)$ be a given function in the Schwartz space. As in \cite{gelfand2000} we define the John transform of $f$ over the line in $\BR^3$ of equation
\[x= \alpha t+\beta, \alpha \in \BR^3\setminus\{(0,0,0)\},\; \beta \in \BR^3\]
by
\[\phi(\alpha,\beta)= \int_{-\infty}^{+\infty}f(t\alpha+\beta)\;dt.\]
The definition forces $\phi$ to verify the following homogeneity and symmetry properties  \cite[p.~45]{gelfand2000}:
\[\sum_{i=1}^3 \alpha_i \frac{\partial \phi}{\partial \beta_i}=0,\quad \sum_{i=1}^3 \alpha_i \frac{\partial \phi}{\partial \alpha_i}=-\phi,\quad \phi(-\alpha,\beta)= \phi(\alpha,\beta).\]
If we represent lines in $\BR^3$ as common points of two affine hyperplanes
\begin{equation}\label{affine}
x_1= x_3\alpha_1+ \beta_1,\quad x_2= x_3\alpha_2+ \beta_2
\end{equation}
then the John transform gets the following form
\[\psi( \alpha_1,\alpha_2,\beta_1, \beta_2)= \int_{-\infty}^{+\infty}f(x_3\alpha_1+ \beta_1, x_3\alpha_2+ \beta_2 )\;dx_3.\]

To study the analytic continuation of the John transform of
 \[\displaystyle f_a(x_1,x_2,x_3)=  { x_1}^{a_1-1}_{+} { x_2}^{a_2-1}_{+} { x_3}^{a_3-1}_{+}, \quad a=(a_1,a_2,a_3)\in \BR^3\] 
 given by
 \[\psi_a( \alpha_1,\alpha_2,\beta_1, \beta_2)= \int_{-\infty}^{+\infty}(x_3\alpha_1+ \beta_1)^{a_1-1}_+( x_3\alpha_2+ \beta_2 )^{a_2-1}_+{x_3}_+^{a_3-1}\;dx_3.\]
 and by \eqref{hypergeometric2}:
  \begin{equation}\label{hypergeometric4}
 \psi_a( \alpha_1,\alpha_2,\beta_1, \beta_2)=\dfrac{\Gamma(a_2) \Gamma(a_3) }{\Gamma(a_2+a_3)}\beta_1^{a_1-1}\beta_2^{a_2+a_3-1}\vert \alpha_2\vert^{-a_3}
 {}_2 F_1\left(-a_1+1,a_3, a_2+a_3;\frac{\alpha_1\beta_2}{\alpha_2\beta_1}\right).
 \end{equation}
  We will also need a description of the modular $\lambda$ function in terms of even   zero-value (Nullwerte) Jacobi theta functions.  We first fix $x\in {\mathbb P}^1\setminus \{0,1,\infty\}$ and consider  $e_1,e_2,e_3\in \BC$ with \[e_1-e_3=1,\quad e_2-e_3=x,\quad e_1-e_2=1-x\] so that $e_1+e_2+e_3=0$ and this determine a  Weierstrass function $\displaystyle \wp(u)$, a meromorphic doubly periodic of periods $\omega_1$, $\omega_3$. We always assume  that $\tau=\dfrac{\omega_3}{\omega_1}$ has strictly positive imaginary part and $x$ appears as a $\Gamma(2)$-modular function of 
  $ \tau$, defined on the upper half-plane $\mathcal H$  with values in $\mathbb{P}^1\setminus\{0,1,\infty\}$. We set $v=\dfrac{u}{2\omega_1}$ and introduce the classical theta series: 
 \begin{equation}
 \label{6000}
 \begin{split}
 \displaystyle\vartheta_1(v,\tau)&=2q^{1/4}\sum_0^\infty(-1)^nq^{n(n+1)}\sin((2n+1)\pi v)\\
 \vartheta_2(v,\tau)&=2q^{1/4}\sum_0^\infty q^{n(n+1)}\cos((2n+1)\pi v)\\
 \vartheta_3(v,\tau)&=1+2\sum_0^\infty q^{n^2}\cos(2n\pi v)\\
\vartheta_4(v,\tau)&=1+2\sum_0^\infty(-1)^nq^{n^2}\cos(2n\pi v).
 \end{split}
 \end{equation}
 The Weierstrass function  $\wp$ is given in terms of Jacobi theta functions by:
  \begin{equation}
 \label{5498}
 \begin{split}
 \displaystyle\sqrt{\wp(u)-e_1}&= \sqrt[4]{1-x}\; \dfrac{\vartheta_2(v,\tau)}{\vartheta_1(\chi,\tau)}\\
 \sqrt{\wp(u)-e_2}&= \sqrt[4]{x(1-x)}\; \dfrac{\vartheta_3(v,\tau)}{\vartheta_1(v,\tau)}\\
 \sqrt{\wp(u)-e_3}&= \sqrt[4]{x}\;\dfrac{\vartheta_4(v,\tau)}{\vartheta_1(v,\tau)}.
 \end{split}
 \end{equation}
 If we set $u=0$ one obtains \cite[p. 96-97]{dolgachev}:
 \[x= \dfrac{e_2-e_3}{e_1-e_3}= \dfrac{\vartheta_2(0,\tau)^4}{\vartheta_3(0,\tau)^4}= \lambda(\tau), \quad 1-x=  \dfrac{\vartheta_1(0,\tau)^4}{\vartheta_3(0,\tau)^4}.\]
  We have from \eqref{hypergeometric3} and  for $0<\Re b<\Re c$ the Wirtinger representation \cite{kampe}:
 \begin{equation}
 \label{5499}
\displaystyle \dfrac{1}{2}\Gamma(b)\Gamma(c-b)F(a,b,c;x(\tau))=\pi^{2b}\Gamma\left(c\right)\vartheta_3(0,\tau)^{4b}\int_0^{\frac{1}{2}}\Phi(v,\tau)d\nu,
\end{equation}
with
$$ \Phi(v,\tau)=\left(\dfrac{\vartheta_1(v,\tau)}{\vartheta_1(0,\tau)}\right)^{2b-1}\left(\dfrac{\vartheta_2(v,\tau)}{\vartheta_2(0,\tau)}\right)^{2(c-b)-1}\left(\dfrac{\vartheta_3(v,\tau)}{\vartheta_3(0,\tau)}\right)^{1-2a}\left(\dfrac{\vartheta_4(v,\tau)}{\vartheta_4(0,\tau)}\right)^{1-2(c-a)}.$$
By using \eqref{hypergeo}, \eqref{hypergeometric4} and \eqref{5499} we obtain one of our main results: The John transform \eqref{hypergeometric4}  is expressible using theta functions, with $\displaystyle x(\tau)=   \dfrac{\alpha_1\beta_2}{\alpha_2\beta_1} . $
\begin{remark}
A natural question concerns the meaning of the factor $\displaystyle \dfrac{\alpha_1\beta_2}{\alpha_2\beta_1}   $ in \eqref{hypergeometric4}. If we eliminate $x_3$ in the equations of the line $\mathcal L$: $$x_1= x_3\alpha_1+ \beta_1,\quad x_2= x_3\alpha_2+ \beta_2$$ we obtain the line $\mathcal L'$:
\[\frac{x_1}{\beta_1}-1= \dfrac{\alpha_1\beta_2}{\alpha_2\beta_1} \left(\frac{x_2}{\beta_2}-1\right)\]
which is the projection of the line  $\mathcal L$ in the $(x_1,x_3)$-plane. The affine map
\[(x_1,x_2)\longrightarrow \left( \frac{x_1}{\beta_1}-1,   \frac{x_2}{\beta_2}-1\right)\]
transforms $\mathcal L'$ into a line of slope $\displaystyle \dfrac{\alpha_1\beta_2}{\alpha_2\beta_1}    $.
\end{remark}
 \subsection{Eigenvalue problem}
We investigate in this section the eigenvalue problem for the differential operators associated to the Frobenius determinant of a finite abelian group. We recall that this differential operator can be factorized into
$$\displaystyle\Theta(G)\left((\partial_{g})_{g\in G}\right)=\prod_{g\in G}\partial_{g},\quad \partial_g=\dfrac{\partial}{\partial x_g}.$$
We are looking for holomorphic solutions $U(x_g,g\in G)$ to
\begin{equation}
\label{f5400}
\displaystyle\Theta(G)\left((\partial_{g})_{g\in G}\right)U=\prod_{g\in G}\partial_{g}U=\lambda U,\quad \lambda\in \C^\times.
\end{equation}
We remark that by an elementary change of variables, the equation \eqref{f5400} is equivalent to the equation
\begin{equation}
\label{f5401}
\displaystyle\Theta(G)\left((\partial_{g})_{g\in G}\right)U=\prod_{g\in G}\partial_{g}U= U.
\end{equation}
Let $n=\sharp{G}$. For ease of notation we identify $(x_g)_{g\in G}$ with $(x_i)_{1\leqslant i\leqslant n}$. Let us introduce the following series which generalize the Bessel functions
\begin{equation}
\label{f5402}
\displaystyle E_k(z)=\sum_{m\geqslant 0}\dfrac{z^m}{(m!)^k},\quad k\geqslant2.
\end{equation}
\begin{theorem}[\cite{chaundy}]
The eigenvectors $U$ of the operator $\dfrac{\partial^n}{\partial_{x_1}\ldots\partial_{x_n}}$ with eigenvalue $1$ can be expressed in the form
\begin{equation}
\label{f5403}
\begin{split}
\displaystyle &U(x_1,\ldots,x_n)=\\
&\sum_{i=1}^n\int_0^{x_1}\int_0^{x_2}\ldots\hat{\int_0^{x_i}}\ldots \int_0^{x_n}\phi_i(t_1,\ldots,t_{n-1})E_{n}(x_i(x_1-t_1)\ldots(x_{i-1}-t_{i-1})\\
&\hat{(x_i-t_i)}(x_{i+1}-t_{i+1})\ldots(x_n-t_n))dt_1dt_2\ldots dt_{n-1}+\sum_{1\leqslant i< j\leqslant n}^n\int_0^{x_1}\int_0^{x_2}\ldots\hat{\int_0^{x_i}}\ldots \hat{\int_0^{x_j}}\ldots \int_0^{x_n}\\
&\varphi_{i,j}(v_1,\ldots,v_{n-2})E_n(x_ix_j(x_1-v_1)(x_2-v_2)\ldots\hat{(x_i-v_i)}\ldots\hat{(x_j-v_j)}\\
&\ldots(x_n-v_n))dv_1dv_2\ldots dv_{n-2}+\ldots+CE_n(x_1x_2\ldots x_n),
\end{split}
\end{equation}
where $C\in \C$ and $\hat{}$ of a symbol means that it is suppressed from the expression.
\end{theorem}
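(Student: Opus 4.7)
The approach is to index the terms on the right-hand side of \eqref{f5403} by the non-empty subsets $S \subseteq \{1,\ldots,n\}$ of ``kept'' indices: for $i \in S$ a factor $x_i$ enters the argument of $E_n$, while for $j \notin S$ a factor $(x_j - t_j)$ appears and $t_j$ is integrated from $0$ to $x_j$; the final term $CE_n(x_1\cdots x_n)$ corresponds to $S=\{1,\ldots,n\}$, and $\phi_S$ is an arbitrary function of the $n-|S|$ integration variables. The plan is to verify that each resulting $V_S$ is an eigenvector, analyze its traces on the coordinate hyperplanes $\{x_j = 0\}$, and invoke a power-series uniqueness argument to conclude that this family exhausts the eigenspace.

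For the verification, I expand $E_n$ as a power series inside the integral defining $V_S$ to obtain
\[
V_S \,=\, \sum_{m\geq 0} \frac{\bigl(\prod_{i\in S} x_i\bigr)^m}{(m!)^n}\, \Psi^S_m, \qquad \Psi^S_m \,:=\, \int \phi_S(t)\prod_{j\notin S}(x_j - t_j)^m\, dt.
\]
The key lemma is that $\prod_{j\notin S}\partial_{x_j}\Psi^S_m = m^{n-|S|}\Psi^S_{m-1}$ for $m \geq 1$, with $\prod_{j\notin S}\partial_{x_j}\Psi^S_0 = \phi_S$. The crucial observation is that each Leibniz boundary term produced by differentiating under $\int_0^{x_j}$ carries the factor $(x_j - t_j)^m$ evaluated at $t_j = x_j$, which vanishes for $m \geq 1$; since the $j$-th differentiation only reduces the $j$-th exponent from $m$ to $m-1$ while leaving the remaining exponents equal to $m$, every intermediate boundary term vanishes similarly. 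Combining this with $\prod_{i \in S}\partial_{x_i}\bigl(\prod_{i\in S} x_i\bigr)^m = m^{|S|}\bigl(\prod_{i\in S}x_i\bigr)^{m-1}$ and the telescoping identity $m^n/(m!)^n = 1/((m-1)!)^n$ yields $\partial_{x_1}\cdots\partial_{x_n} V_S = V_S$; the edge case $S = \{1,\ldots,n\}$ reduces to the direct computation $\partial_{x_1}\cdots\partial_{x_n}(x_1\cdots x_n)^m = m^n(x_1\cdots x_n)^{m-1}$ applied to the series $E_n(x_1\cdots x_n)$.

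Next, I analyze the boundary traces. Direct inspection shows $V_S|_{x_j = 0} = 0$ for every $j \notin S$ (the interval $[0, x_j]$ collapses), while on the face $\mathcal F_S := \{x_i = 0 : i \in S\}$ the restriction $V_S|_{\mathcal F_S}$ equals the iterated antiderivative $\int_0^{x_{j_1}}\cdots\int_0^{x_{j_{n-|S|}}}\phi_S(t)\,dt$, which ranges over arbitrary holomorphic functions of $(x_j)_{j\notin S}$ as $\phi_S$ varies; in particular $V_{S'}|_{\mathcal F_T}$ vanishes whenever $T \not\subseteq S'$. To prove uniqueness, I write a formal holomorphic solution as $U = \sum_\alpha c_\alpha x^\alpha$ and match coefficients in $\partial_{x_1}\cdots\partial_{x_n} U = U$, obtaining the recursion $c_{\alpha + \mathbf{1}} = c_\alpha/\prod_i(\alpha_i + 1)$. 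Hence $U$ is determined by $\{c_\alpha : \min_i \alpha_i = 0\}$, and these coefficients partition according to $T = \{i : \alpha_i = 0\}$ into independent free data (one free power series in the $n - |T|$ variables $x_j$, $j \notin T$, for each non-empty $T$). Starting from $T = \{1,\ldots,n\}$ (which fixes $C := U(0)$) and proceeding by induction on decreasing $|T|$, one chooses $\phi_T$ by differentiating the residual trace $U|_{\mathcal F_T} - \sum_{T'\supsetneq T} V_{T'}|_{\mathcal F_T}$ exactly $n - |T|$ times in the variables $x_j$, $j \notin T$; this inverts the iterated integral and yields the claimed decomposition.

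The principal obstacle is the bookkeeping in the verification step, namely justifying that every Leibniz boundary contribution from the successive $n - |S|$ differentiations of $\Psi^S_m$ vanishes at the appropriate intermediate stage, which rests on the exponent of $(x_j - t_j)$ remaining at least $1$ until the $j$-th differentiation is performed. A secondary technical point is the convergence of $\sum_m \bigl(\prod_{i\in S}x_i\bigr)^m \Psi^S_m/(m!)^n$ in a neighborhood of the origin; this follows by majorizing $|\phi_S|$ on a compact set and using that $E_n$ is an entire function (of order $1/n$), so the series converges in any polydisc.
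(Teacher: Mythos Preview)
The paper does not supply a proof of this theorem: it is stated with attribution to Chaundy and the text then moves directly to operator identities for classical Bessel functions. There is therefore nothing in the paper to compare your argument against.

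On its own merits your argument is sound. The verification that each $V_S$ is an eigenfunction is correct; the key point, which you identify, is that differentiating $\Psi^S_m$ in a variable $x_j$ with $j\notin S$ produces a Leibniz boundary term carrying the factor $(x_j-t_j)^m|_{t_j=x_j}=0$, and since no other $\partial_{x_{j'}}$ touches that factor the exponent stays at $m\ge 1$ until its own turn. Combined with $\prod_{i\in S}\partial_{x_i}(\prod_{i\in S}x_i)^m=m^{|S|}(\prod_{i\in S}x_i)^{m-1}$ and the fact that the $m=0$ term is killed by the $S$-derivatives, the telescoping gives $\partial_{x_1}\cdots\partial_{x_n}V_S=V_S$. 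The power-series recursion $c_{\alpha+\mathbf 1}=c_\alpha/\prod_i(\alpha_i+1)$ and the stratification of the free data by $T=\{i:\alpha_i=0\}$ are also correct.

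One point deserves an extra sentence. In your induction on decreasing $|T|$, before you can invert the iterated antiderivative to recover $\phi_T$, you need the residual trace $\bigl(U-\sum_{T'\supsetneq T}V_{T'}\bigr)\big|_{\mathcal F_T}$ to vanish on every sub-face $\mathcal F_{T'}$ with $T'\supsetneq T$ (equivalently, to vanish whenever any $x_j$ with $j\notin T$ is set to zero). This follows from the inductive hypothesis together with $V_{S}|_{\mathcal F_{T'}}=0$ for $T'\not\subseteq S$, but you pass over it; stating it explicitly closes the argument.
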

A large part of the theory of Bessel functions can be built on an operator formalism. In fact the two fundamental relations:
\begin{equation}
\label{5500}
\displaystyle \dfrac{2\nu}{z}J_{\nu}(z)=J_{\nu-1}(z)+J_{\nu+1}(z)
\end{equation}
\begin{equation}
\label{5501}
\displaystyle 2\dfrac{d}{dz}J_{\nu}(z)=J_{\nu-1}(z)-J_{\nu+1}(z)
\end{equation}
not only give the second order differential equation for the Bessel function
$$J^{\prime\prime}_\nu(z)+\dfrac{1}{z}J_\nu^\prime(z)+(1-\dfrac{\nu^2}{z^2})J_\nu(z)=0$$
but give also a representation of the operator $M$ of multiplication by $\dfrac{2}{z}$ in terms of the shift operators $\tau_{-1}$ and $\tau_{1}$, on the right and on the left of the indices. The second relation gives a representation of the operator $2\dfrac{d}{dz}$ by the same shift operators. Hence for instance
$$\displaystyle \left(\dfrac{1}{z}\dfrac{d}{dz}\right)^m\left[z^\nu J_\nu(z)\right]=z^{\nu-m}J_{\nu-m}(z)$$
$$\left(\dfrac{1}{z}\dfrac{d}{dz}\right)^m\left[z^{-\nu}J_{\nu}(z)\right]=(-1)^mz^{-\nu-m}J_{\nu+m},\quad m=1,2,\ldots.$$
Moreover $y=z^{-n}J_n(z)$ satisfies:
$$\displaystyle z\dfrac{d^2y}{dz^2}+(2n+1)\dfrac{dy}{dz}+zy=0.$$
We define a formal operator $D^{-1}$, acting on continuous functions by 
$$\displaystyle D^{-1}f(z)=\int_0^zf(t)dt$$
which is an inverse of the operator $\displaystyle D= \dfrac{d}{dz}$. 
For $\nu=m$ an integer we have 
$$J_m(z)=z^{-m}\left(\dfrac{1}{z}\dfrac{d}{dz}\right)^{-m}J_0(z)$$
so that for $|z|<1$ and interpreting $\left(\dfrac{1}{z}\dfrac{d}{dz}\right)^{-1}$ as the product of the two inverse operators $ D^{-1}\circ z$ we get for example 
$$\displaystyle J_1(z)=\dfrac{1}{z}\left(\dfrac{1}{z}\dfrac{d}{dz}\right)^{-1}zJ_0(z)=\dfrac{1}{z}\int_0^z \zeta J_0(\zeta)d\zeta.$$
For $m=2$ we obtain:
\begin{equation}
\label{5502}
\displaystyle
\begin{split}
J_2(z)&=\dfrac{1}{z^2}\left(\dfrac{1}{z}\dfrac{d}{dz}\right)^{-1}\left(\dfrac{1}{z}\dfrac{d}{dz}\right)^{-1}J_0(z)\\
&=\dfrac{1}{z^2}\left(\dfrac{1}{z}\dfrac{d}{dz}\right)^{-1}zJ_1(z)=\dfrac{1}{z^2}\int_0^z\zeta^2 J_1(\zeta)d\zeta\\
&=\dfrac{1}{2z^2}\int_0^z(z^2-\zeta^2)J_0(\zeta)d\zeta.
\end{split}
\end{equation}
The last equality is obtained by integration by parts. By induction we have a representation of all $J_m$ in terms of the single $J_0$:
$$\displaystyle J_m(z)=\dfrac{1}{2^{m-1}\Gamma(m)z^m}\int_0^z(z^2-\zeta^2)^{m-1}J_0(\zeta)d\zeta.$$
The change of variables $\zeta=z\sin(\theta)$ gives
$$\displaystyle J_m(z)=\dfrac{z^m}{2^{m-1}\Gamma(m)}\int_0^{\pi/2}\sin\theta\cos\theta^{2m-1}J_0(z\sin\theta)d\theta.$$
By a generalization to arbitrary indices we obtain the well-known Sonine's integral
$$\displaystyle J_m(z)=\dfrac{z^\nu}{2^{\nu-1}\Gamma(\nu)}\int_0^{\pi/2}\sin\theta\cos\theta^{2\nu-1}J_0(z\sin\theta)d\theta.$$
Another aspect of operator formalism is related to the symmetric products or tensor products of differential operators:
$J_\nu(z)$ satisfies the differential equation:
$$L_\nu(J_\nu)=J_\nu^{\prime\prime}(z)+\dfrac{1}{z}J_\nu^\prime(z)+(1-\dfrac{\nu}{z^2})J_\nu(z)=0$$
and likewise $J_\mu$ satisfies
$$L_\mu(J_\mu)=J_\mu^{\prime\prime}(z)+\dfrac{1}{z}J_\mu^\prime(z)+(1-\dfrac{\mu}{z^2})J_\mu(z)=0$$
and then $J_\mu(z)J_\nu(z)$ satisfies a fourth order differential equation. More precisely a generalization of the Gauss hypergeometric function \eqref{hypergeometric1} 
leads to
$$\displaystyle\Hypergeometric{p}{q}{a}{b}{z}=\sum_0^\infty\dfrac{(a_1)_n(a_2)_n\ldots(a_p)_n}{(b_1)_n\ldots(b_q)_n}\dfrac{z^n}{n!}$$
where $(a)_0=1$, $(a)_n=a(a+1)\ldots(a+n-1)=\dfrac{\Gamma(a+n)}{\Gamma(a)}$. Let $\delta=z\dfrac{d}{dz}$, then $\Hypergeometric{p}{q}{a}{b}{z}$ satisfies
$$\{\delta(\delta+b_1-1)\ldots(\delta+b_q-1)-z(\delta+a_1)\ldots (\delta+a_p)\}u=0$$
which can be written as:
$$\displaystyle \sum_1^q z^{n-1}(a_nz-b_n)D^nv+a_0v+z^qD^{q+1}v=0,\quad D=\dfrac{d}{dz}.$$
For $q=3$ we obtain a fourth order differential operator satisfied by the generalized hypergeometric function $_2F_3$. This differential equation is precisely the tensor product of the two second order differential operators $L_\nu$ and $L_\mu$. Hence the following formulas
\begin{equation}
\label{5503}
\displaystyle
J_\nu(x)J_\mu(x)=(\dfrac{1}{2}z)^{\nu+\mu}\sum_0^\infty\dfrac{(-1)^m\Gamma(\mu+\nu+2m+1)}{\Gamma(\mu+m+1)+\Gamma(\nu+m+1)\Gamma(\mu+\nu+m+1)}(\frac{1}{2}z)^{2m}
\end{equation}
and
\begin{equation}
\label{5504}
\displaystyle
\begin{split}
&\Gamma(\mu+1)\Gamma(\nu+1)J_\nu(x)J_\mu(x)\\
&=(\dfrac{1}{2}z)^{\nu+\mu}\,_2F_3\left(\frac{1}{2}+\frac{1}{2}\mu+\frac{1}{2}\nu,\frac{1}{2}+\frac{1}{2}\mu+\frac{1}{2}\nu;1+\nu,1+\mu,1+\nu+\mu;-z^2\right).
\end{split}
\end{equation}
Note that $J_\nu(z)=(\dfrac{1}{2}z)^\nu\dfrac{e^{-iz}}{\Gamma(\nu+1)}\,_1F_1(\nu+\frac{1}{2};2\nu+1;2iz)$.
It is possible to expand an analytic function $f$ in a domain containing the origin in a series of Bessel functions. This is Neumann's series and has the form
$$\displaystyle f(z)=\sum_0^\infty a_nJ_n(z),\quad a_n\in\C.$$
Using Cauchy's formula, we are reduced to consider an expansion of the form
\begin{equation}
\label{5505}
\dfrac{1}{t-z}=\Theta_0(t)J_0(z)+2\Theta_1(t)J_1(z)+2\Theta_2(t)J_2(z)+\ldots
\end{equation}
where $\Theta_n(t)$ are functions of $t$ alone. The main ingredient is the following partial differential equation
$$(\dfrac{\partial}{\partial t}+\dfrac{\partial}{\partial z})\dfrac{1}{t-z}=0$$
and from computations and by using the relation
$$2J_n^\prime(z)=J_{n-1}(z)-J_{n+1}(z)$$
we obtain
$$\Theta_0(t)=\dfrac{1}{t},\quad\Theta_1(t)=-\Theta_0^\prime(t),\quad\Theta_{n+1}(t)=\Theta_{n-1}(t)-2\Theta_n^\prime(t)$$
is a remarkable that the equations satisfied par $J_n$ and $\Theta_n$ are adjoint. Actually $\Theta_n(t)$ is a polynomial in $1/t$ given by:
$$\displaystyle\Theta_{2n}(t)=\frac{1}{2}n\sum_0^n\dfrac{(n+m-1)!}{(n-m)!}(1/2t)^{-2m-1}$$
$$\displaystyle\Theta_{2n}(t)=\frac{1}{2}(n+\frac{1}{2})\sum_0^n\dfrac{(n+m)!}{(n-m)!}(1/2t)^{-2m-2}.$$
They are called Neumann's polynomials. Writing both cases as
$$\displaystyle\Theta_n(t)=1/4\sum_0^{n/2}\dfrac{n(n-m+1)!}{m!}(1/2t)^{2m-n-1}$$
we see that
$$|\Theta_n(t)|\leqslant 2^{n-1}n!|t|^{-n-1}e^{\frac{1}{4}|t|^2}.$$
If $\mathscr{C}$ is a simple closed contour around the origin, then
$$\displaystyle \int_\mathscr{C}\Theta_m(z)\Theta_n(z)dz=0\quad\quad\quad m=n;\,m≠n.$$
$$\displaystyle \int_\mathscr{C}J_m(z)\Theta_n(z)dz=0\quad\quad\quad m≠n.$$
$$\displaystyle \int_\mathscr{C}J_m(z)\Theta_m(z)dz=i\pi.$$
\begin{remark}
If we consider the $\C$-vector space generated by all $J_n,\,n\in\Z$, with 
$$J_{-n}(z)=(-1)^nJ_n(z),\quad n\in\Z$$
and if $l$ (resp. $r$) is the shift to the left (resp. right) acting on the indices, the relation \eqref{5501} reads
$$(2\dfrac{d}{dz})J_\nu=(l-r)J_\nu$$
with $lr=rl=\Id$. Then
$$\displaystyle(2\dfrac{d}{dz})^m=(l-r)^m=\sum_0^m\binom{m}{k}(-1)^{m-k}l^kr^{m-k}$$
or
$$\displaystyle (2\dfrac{d}{dz})^mJ_\nu=\sum_0^m\binom{m}{k}(-1)^{m-k}J_{\nu-2k+m}.$$
\end{remark}\section{Factorization, differential Galois theory and Bessel functions}
We give only a short overview of differential Galois theory. We refer to \cite{sing2003} and \cite[chap.~8]{beukers2001} for more details. Let $(k,\partial)$ be a differential field with algebraically closed field of constants $C_k$ and consider the linear differential equation
\begin{equation}
\label{5700}
\mathscr{L}y=0,\quad\mathscr{L}=\partial^n+f_1\partial^{n-1}+\ldots+f_n,\quad f_i\in k,\quad (i=1,\ldots,n).
\end{equation}
Usually, the solutions of \eqref{5700} do not lie in $k$. So we look for differential extensions of $(k,\partial)$ containing the solutions.
\begin{definition}
A differential extension of $(\mathscr{K},\partial)$ of $(k,\partial)$ such that
\begin{enumerate}  
\item $C_{\mathscr{K}}=C_k$,
\item $\mathscr{K}$ contains $n$ $C_{k}$ linear independent solutions $y_1,\ldots,y_n$ of \eqref{5700} and 
$$\mathscr{K}=k<y_1,\ldots,y_n>,$$
where $k<y_1,\ldots,y_n>$ is the differential extension generated by $(y_1,\ldots,y_n)$, is called a Picard-Vessiot extension or shortly PVE.
\end{enumerate}
\end{definition}
\begin{theorem}[Kolchin]
Let $(k,\partial)$ be a differential field. Assume the characteristic of $k$ is zero and that $C_k$ is algebraically closed. Then to any linear differential there exists an associated PVE. Moreover, this extension is unique up to differential isomorphism.   
\end{theorem}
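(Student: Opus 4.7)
The plan is to prove existence by constructing a concrete Picard-Vessiot extension, and to prove uniqueness by a tensor-product argument that exploits the algebraic closedness of $C_k$.

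For existence, I would first rewrite the scalar equation $\mathscr{L} y = 0$ as a first-order matrix system $\partial Y = A Y$ using the companion matrix of $\mathscr{L}$; a fundamental solution matrix of this system is equivalent to the data of $n$ linearly independent solutions of \eqref{5700} together with their successive derivatives. Next, introduce $n^2$ differential indeterminates $(Y_{ij})$ and form the \emph{universal solution ring}
\[
R = k\bigl[Y_{ij},\, \det(Y_{ij})^{-1}\bigr],
\]
equipped with the unique extension of $\partial$ satisfying $\partial(Y) = A Y$. Pick a maximal differential ideal $M \subset R$, and set $S = R/M$. The key lemma is that $S$ is a \emph{simple differential ring} (no proper nonzero differential ideals), hence in particular $S$ is an integral domain. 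Taking $\mathscr{K} = \operatorname{Frac}(S)$ and reading off the images of the first row of $(Y_{ij})$ yields $n$ solutions $y_1, \ldots, y_n$ of $\mathscr{L} y = 0$ whose Wronskian is a unit.

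The delicate point is verifying that no new constants are created, that is, $C_{\mathscr{K}} = C_k$. This is where algebraic closedness enters: one shows first that $C_S \subset S$ is a field, then that $C_S$ is algebraic over $C_k$ (because otherwise a transcendental constant would generate a proper nontrivial differential ideal, contradicting simplicity of $S$), and finally that $C_S = C_k$ by algebraic closedness. A standard consequence is that $C_{\mathrm{Frac}(S)} = C_S = C_k$. This is the step I expect to be the main obstacle, since it requires the careful interplay between the maximality of $M$ as a \emph{differential} ideal and the constant field.

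For uniqueness up to differential isomorphism, let $\mathscr{K}_1, \mathscr{K}_2$ be two PVEs of \eqref{5700}. Form the differential ring $T = \mathscr{K}_1 \otimes_k \mathscr{K}_2$ with derivation $\partial \otimes \Id + \Id \otimes \partial$, and choose a maximal differential ideal $N \subset T$. Set $U = T/N$; the same constants argument as above, again using that $C_k$ is algebraically closed, gives $C_U = C_k$. The two natural maps $\mathscr{K}_i \hookrightarrow T \twoheadrightarrow U$ are differential embeddings whose images contain two full systems of solutions of $\mathscr{L} y = 0$ in $U$; since the space of solutions in any differential extension with constants $C_k$ is an $n$-dimensional $C_k$-vector space, the two systems differ by a matrix in $\mathrm{GL}_n(C_k)$, and it follows that both embeddings have the same image, call it $\mathscr{K}$. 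Composing $\mathscr{K}_1 \hookrightarrow \mathscr{K}$ with the inverse of $\mathscr{K}_2 \hookrightarrow \mathscr{K}$ produces the desired differential $k$-isomorphism $\mathscr{K}_1 \xrightarrow{\sim} \mathscr{K}_2$. Throughout, I would follow the presentation in \cite{sing2003} for the technical verifications and suppress the routine checks on the derivation extending to localizations and quotients.
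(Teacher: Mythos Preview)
The paper does not actually prove this theorem: it is stated as a background result attributed to Kolchin, with the reader referred to \cite{sing2003} and \cite[chap.~8]{beukers2001} for details, and the text moves directly to the definition of the differential Galois group. Your outline is precisely the standard construction found in \cite{sing2003} (universal solution algebra, quotient by a maximal differential ideal, no-new-constants lemma, tensor-product argument for uniqueness), so it is entirely consistent with the paper's intent and its cited source; there is simply nothing in the paper itself to compare it against.
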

\begin{definition}
Let $(\mathscr{K},\partial)$ be a PVE of equation \eqref{5700}. The differential Galois group of equation \eqref{5700} (or of $\mathscr{K}/k)$ is defined as the group of differential automorphisms $\phi:\mathscr{K}\to\mathscr{K}$ such that $\phi f=f$ for all $f\in k$; notation: $Gal_\partial(\mathscr{K}/k)$.
\end{definition}
\begin{lemma}
Let notations be as above, and let $\mathscr{J}$ be the ideal of polynomials $$Q\in k[X_1^{(0)},\ldots,X_n^{(0)},X_1^{(1)},\ldots,X_n^{(1)},\ldots,X_1^{(n-1)},\ldots,X_n^{(n-1)}]$$ such that $Q(\ldots,\partial^jy_i,\ldots)=0$ (we have substituted $\partial^jy_i$ for $X_i^{(j)}$). Then 
$$\displaystyle Gal_\partial (\mathscr{K}/k)=\big\{g_{ik}\in GL(n,C_k)|Q(\ldots,\partial^j(\sum_{k=1}^ng_{ik}y_k),\ldots)=0,\forall Q\in \mathscr{J}\big\}.$$
\end{lemma}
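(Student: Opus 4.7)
The plan is to prove the stated equality of subsets of $GL(n,C_k)$ by a double inclusion, exploiting the fact that a differential automorphism $\phi$ of $\mathscr K$ over $k$ is determined by its action on the fundamental system $y_1,\dots,y_n$, and that conversely any $C_k$-linear change of fundamental system that preserves the full set of differential-algebraic relations over $k$ extends to an automorphism.

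For the inclusion ``$\subseteq$'', I would take $\phi\in Gal_\partial(\mathscr K/k)$ and first show that $\phi$ permutes the solution space of $\mathscr L y=0$. Indeed $\phi$ commutes with $\partial$ and fixes each $f_i\in k$, so applying $\phi$ to $\mathscr L y_i=0$ gives $\mathscr L\phi(y_i)=0$. Since $C_{\mathscr K}=C_k$, the $C_k$-vector space of solutions of $\mathscr L y=0$ in $\mathscr K$ has dimension at most $n$, hence coincides with $\mathrm{span}_{C_k}(y_1,\dots,y_n)$, so there exist $g_{ik}\in C_k$ with $\phi(y_i)=\sum_{k=1}^n g_{ik}y_k$. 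Invertibility of $\phi$ together with the $C_k$-linear independence of $y_1,\dots,y_n$ forces $(g_{ik})\in GL(n,C_k)$. Now for $Q\in\mathscr J$, the relation $Q(\dots,\partial^j y_i,\dots)=0$ holds in $\mathscr K$; applying $\phi$, which fixes the coefficients of $Q$ (all in $k$) and commutes with $\partial$, yields $Q(\dots,\partial^j(\sum_k g_{ik}y_k),\dots)=0$, as required.

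For the inclusion ``$\supseteq$'', given $(g_{ik})\in GL(n,C_k)$ satisfying the relations in the statement, I would construct a corresponding $\phi\in Gal_\partial(\mathscr K/k)$. Present $\mathscr K$ as the fraction field of the differential subring $R=k\{y_1,\dots,y_n\}\subset\mathscr K$, and consider the evaluation map
\[
ev\colon k[X_i^{(j)}]\twoheadrightarrow R,\qquad X_i^{(j)}\mapsto\partial^j y_i,
\]
whose kernel is by definition $\mathscr J$, giving $R\cong k[X_i^{(j)}]/\mathscr J$ as differential rings. Now define a second evaluation
\[
ev'\colon k[X_i^{(j)}]\longrightarrow R,\qquad X_i^{(j)}\mapsto\partial^j\bigl(\textstyle\sum_{k=1}^n g_{ik}y_k\bigr),
\]
which respects the derivation by construction. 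The hypothesis on $(g_{ik})$ says exactly that $ev'$ vanishes on $\mathscr J$, so it factors through $R$ and yields a differential $k$-algebra endomorphism $\phi\colon R\to R$. Applying the same construction to the matrix $(g_{ik})^{-1}\in GL(n,C_k)$ (which satisfies the analogous relations, since it is characterized by producing the inverse substitution) gives a two-sided inverse of $\phi$, hence an automorphism of $R$, and extending to the fraction field $\mathscr K$ produces the desired element of $Gal_\partial(\mathscr K/k)$.

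The main technical obstacle is the identification $R\cong k[X_i^{(j)}]/\mathscr J$: one must argue that every differential-algebraic relation over $k$ among the $y_i$ is captured by $\mathscr J$, and that restricting the indeterminates to orders $j\leq n-1$ is harmless because the equation $\mathscr L y_i=0$ already expresses $\partial^m y_i$ for $m\geq n$ as a $k$-linear combination of $\partial^j y_i$ with $j<n$. Once this presentation is established, the compatibility of $\phi$ with the derivation is automatic from the construction of $ev'$, and the extension to the fraction field presents no further difficulty because $\phi$ is injective on the integral domain $R$.
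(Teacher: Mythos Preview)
The paper does not prove this lemma at all; it is stated without proof as part of a brief survey of differential Galois theory, with references to Singer--van der Put and Beukers for details. So there is no ``paper's approach'' to compare against.

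Your argument is the standard one and is essentially correct, but one step is under-justified. In the ``$\supseteq$'' direction you assert that $(g_{ik})^{-1}$ also satisfies the analogous relations, ``since it is characterized by producing the inverse substitution.'' This is not automatic: writing $\sigma_g$ for the $k$-algebra automorphism of $k[X_i^{(j)}]$ sending $X_i^{(j)}\mapsto\sum_k g_{ik}X_k^{(j)}$, your hypothesis says $\sigma_g(\mathscr J)\subseteq\mathscr J$, while the claim about $(g_{ik})^{-1}$ is $\sigma_g^{-1}(\mathscr J)\subseteq\mathscr J$, i.e.\ $\mathscr J\subseteq\sigma_g(\mathscr J)$. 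A clean fix: observe directly that your endomorphism $\phi\colon R\to R$ is \emph{surjective}, because the elements $z_i=\sum_k g_{ik}y_k$ and their derivatives lie in the image, and from $y_i=\sum_k(g^{-1})_{ik}z_k$ the generators $\partial^j y_i$ of $R$ are recovered. Since $R\cong k[X_i^{(j)}]/\mathscr J$ is a finitely generated $k$-algebra, hence Noetherian, a surjective ring endomorphism is automatically an isomorphism. This gives the automorphism of $R$ (and then of $\mathscr K=\mathrm{Frac}(R)$) without needing to verify the hypothesis separately for $(g_{ik})^{-1}$.

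The remaining points---that higher derivatives $\partial^m y_i$ with $m\geq n$ reduce to lower ones via $\mathscr L y_i=0$, so that the $n^2$ indeterminates suffice, and that $\mathscr K$ is the fraction field of $R$---are handled correctly.
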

\begin{remark}
The determinant of $\phi\ Gal_\partial(\mathscr{K}/k)$ can be read off from its action on the wronskian $W(y_1,\ldots,y_n)$, since $W(\phi y_1,\ldots,\phi y_n)=\det(\phi)W(y_1,\ldots,y_n)$. Moreover notice that $W(vy_1,\ldots,vy_n)=v^rW(y_1,\ldots,y_n),v\in\mathscr{K}$ and $\partial W(y_1,\ldots,y_n)=-f_1W(y_1,\ldots,y_n)$. 
\end{remark}
\begin{theorem}[Kolchin]
Let $(\mathscr{K},\partial)$ be a PVE extension of $(k,\partial)$ with differential Galois group $G=Gal_\partial (\mathscr{K}/k)$. Then 
\begin{enumerate}
\item If $f\in\mathscr{K}$ is such that $\phi f=f,\forall\in G$, then $f\in\mathscr{K}$.
\item Let $H$ be an algebraic subgroup of $G$ such that $k=\big\{f\in\mathscr{K}|\phi f=f,\phi\in H\big\}$. Then $H=G$.
\item There is a one-to-one correspondence  between algebraic subgroups $H\subset G$ and intermediate differential extensions $M$ of $k$ (i.e $k\subset M\subset \mathscr{K})$ 
$$H= Gal_\partial(\mathscr{K}/M),M=\{f\in\mathscr{K}|\phi f=f,\forall \phi\in H\}.$$
\item Under the correspondence given by $(3)$ a normal algebraic subgroup $H$ of $G$ corresponds to a PVE and conversely. In such a situation $Gal_\partial(M/k)=G/H$.
\item The differential Galois group of a PVE is a linear algebraic group over $C_k$ with dimension as an algebraic group equal to the transcendence degree of $\mathscr{K}$ over $k$. 
\end{enumerate}
\end{theorem}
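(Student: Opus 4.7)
The strategy is to reduce everything to the \emph{Picard--Vessiot ring} $R \subset \mathscr{K}$ and the structure of $\mathrm{Spec}(R)$ as a $G$-torsor over $k$. Fix a fundamental solution matrix $Y \in GL_n(\mathscr{K})$ of $\mathscr{L}y=0$ and set $R := k[Y_{ij},(\det Y)^{-1}]$; this is a simple differential $k$-algebra with fraction field $\mathscr{K}$, simplicity following from the hypothesis $C_{\mathscr{K}} = C_k$. The group $G$ acts on $R$ by $k$-algebra differential automorphisms (it permutes fundamental matrices by right multiplication by a constant matrix), and the cornerstone of the entire proof is the \emph{torsor isomorphism}
\[
R \otimes_k R \;\cong\; R \otimes_{C_k} C_k[G],
\]
obtained by identifying $C_k[G]$ with the ring of constants of $R \otimes_k R$.

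With this isomorphism in hand I would prove (1) first: by faithfully flat descent along $R \hookrightarrow R \otimes_k R$, the invariants $R^G$ pull back to $R \otimes 1$, hence $R^G = k$, and taking fraction fields yields $\mathscr{K}^G = k$. Statement (5) follows at once from the same isomorphism: comparing Krull dimensions (or transcendence degrees) of both sides over $R$ gives $\mathrm{trdeg}_k \mathscr{K} = \dim G$, while the embedding $G \hookrightarrow GL_n(C_k)$ supplied by the preceding lemma shows that $G$ is linear algebraic.

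For (2) and (3) the crucial observation is that $\mathscr{K}$ is still a Picard--Vessiot extension of any intermediate differential field $M$, using the same fundamental matrix $Y$; applying (1) to $\mathscr{K}/M$ then yields $\mathscr{K}^{\mathrm{Gal}_\partial(\mathscr{K}/M)} = M$. Conversely, for a closed subgroup $H \le G$, set $M := \mathscr{K}^H$; the nontrivial inclusion $\mathrm{Gal}_\partial(\mathscr{K}/M) \subseteq H$ is handled by a Chevalley-style semi-invariant argument, realizing a line in a rational representation of $G$ whose stabilizer is precisely $H$ inside $\mathscr{K}$, so that any strictly larger group fails to fix $M$. Statement (2) is the case $M = k$. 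For (4), normality $H \triangleleft G$ translates into the invariance of $M = \mathscr{K}^H$ under all of $G$; then $M/k$ inherits a faithful $G/H$-action with fixed field $k$, and (3) applied to $M/k$ identifies it as a PVE with Galois group $G/H$, the defining equation being built from the minimal polynomial over $k$ of any element generating a faithful $G/H$-representation inside $M$.

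The main obstacle is the torsor isomorphism itself: one must show that the ring of constants of $R \otimes_k R$ is a finitely generated $C_k$-algebra isomorphic to $C_k[G]$. This rests on the simplicity of $R$ as a differential ring together with a non-vanishing Wronskian argument ensuring that any differential $k$-automorphism of $\mathscr{K}$ is recorded by a constant matrix in $GL_n(C_k)$. Once this structural input is granted, all five items are essentially formal consequences of faithfully flat descent along the torsor and the standard bijection between closed subgroups of $G$ and $G$-stable closed subschemes of $\mathrm{Spec}(R)$.
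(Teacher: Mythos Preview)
The paper does not prove this theorem at all: it is stated as background material, with the reader referred to \cite{sing2003} and \cite[chap.~8]{beukers2001} for the proofs. So there is no ``paper's own proof'' to compare against.

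Your outline is the standard modern argument, essentially the one in van der Put--Singer \cite{sing2003}: build the Picard--Vessiot ring $R$, establish the torsor isomorphism $R\otimes_k R \cong R\otimes_{C_k} C_k[G]$, and deduce the Galois correspondence from faithfully flat descent plus Chevalley's theorem on stabilizers. As a sketch it is sound. Two remarks worth tightening: first, the simplicity of $R$ as a differential ring is not an immediate consequence of $C_{\mathscr K}=C_k$ alone---one typically constructs $R$ as a quotient of the universal solution algebra by a maximal differential ideal and then proves that the constants of its fraction field coincide with $C_k$; the logical order matters here. Second, your argument for (4) that $M/k$ is itself a PVE requires producing a linear differential equation over $k$ whose solution space generates $M$; this is where one uses that $G/H$ is again linear algebraic and embeds it in some $GL_m$ via a faithful representation, then pulls back the tautological equation. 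You gesture at this (``the defining equation being built from the minimal polynomial\ldots'') but the actual construction goes through a representation, not a minimal polynomial. With those two points fleshed out, the plan is complete and matches the cited references.
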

\begin{theorem}
Let $(\mathscr{K}/k)$ be a PVE corresponding to \eqref{5700}. Let $V$ be is $C_k$ vector space of solutions and $G=Gal_\partial(\mathscr{K}/k)$ the differential Galois group. Then the following statements are equivalent:
\begin{enumerate}
\item There is a non-trivial linear space $W\subset V$ which is stable under $G$.
\item The operator $\mathscr{L}$ factors as $\mathscr{L}_1\mathscr{L}_2$, where $\mathscr{L}_1$ and $\mathscr{L}_2$ are linear differential operators with coefficients in $F$ and of order strictly less than $n$. Moreover $y\in W\iff\mathscr{L}_2y=0.$ 
\end{enumerate}
\end{theorem}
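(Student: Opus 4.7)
The plan is to prove the two implications $(2)\Rightarrow(1)$ and $(1)\Rightarrow(2)$ separately. The core tools are: the existence and uniqueness of right-Euclidean division in the non-commutative ring of linear differential operators $k[\partial]$; the Wronskian construction of a monic differential operator vanishing on a prescribed set of solutions; and the Galois fixed-field part of Kolchin's theorem stated above.

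For $(2)\Rightarrow(1)$, I would set $W:=\{y\in V\mid \mathscr{L}_2 y=0\}$. Every $y\in W$ satisfies $\mathscr{L}y=\mathscr{L}_1\mathscr{L}_2 y=0$, so $W\subset V$. Its dimension equals $\operatorname{ord}(\mathscr{L}_2)$ because $\mathscr{L}_2$ itself has a PVE of that order inside $\mathscr{K}$. For $\phi\in G$ and $y\in W$, since $\mathscr{L}_2$ has coefficients in $k$ and $\phi$ commutes with $\partial$ and fixes $k$, one gets $\mathscr{L}_2(\phi y)=\phi(\mathscr{L}_2 y)=0$, so $\phi y\in W$. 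Hence $W$ is a non-trivial $G$-stable subspace (non-trivial because $1\leq\operatorname{ord}(\mathscr{L}_2)<n$).

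For $(1)\Rightarrow(2)$, I would start from a basis $w_1,\dots,w_m$ of $W$ with $0<m<n$, and define
\[
\mathscr{L}_2 y\;:=\;\frac{W(w_1,\dots,w_m,y)}{W(w_1,\dots,w_m)},
\]
the monic linear differential operator of order $m$ whose kernel in $\mathscr{K}$ contains (hence equals, by dimension) $W$. A priori $\mathscr{L}_2\in\mathscr{K}[\partial]$. The central step is to show that the coefficients of $\mathscr{L}_2$ actually lie in $k$: for any $\phi\in G$, $\phi$ permutes the $w_i$ by a linear transformation (since $W$ is $G$-stable), so both Wronskians are multiplied by $\det\phi|_W$ and the quotient is $G$-invariant. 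By the Galois-fixed-field statement in Kolchin's theorem (every $\phi$-fixed element of $\mathscr{K}$ lies in $k$), the coefficients of $\mathscr{L}_2$ belong to $k$, so $\mathscr{L}_2\in k[\partial]$.

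The remaining step is to show $\mathscr{L}_2$ right-divides $\mathscr{L}$ in $k[\partial]$. Using right-Euclidean division in $k[\partial]$, write $\mathscr{L}=\mathscr{L}_1\mathscr{L}_2+R$ with $R\in k[\partial]$ and $\operatorname{ord}(R)<m$. Every $w\in W$ satisfies both $\mathscr{L}w=0$ and $\mathscr{L}_2 w=0$, hence $Rw=0$; so $R$ has an $m$-dimensional space of zeroes while its order is strictly less than $m$, forcing $R=0$. This yields the factorization $\mathscr{L}=\mathscr{L}_1\mathscr{L}_2$ with both factors in $k[\partial]$ of order strictly less than $n$, and by construction $y\in W\iff\mathscr{L}_2 y=0$. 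The main obstacle, and the only non-formal ingredient, is the descent of the coefficients of $\mathscr{L}_2$ from $\mathscr{K}$ to $k$; everything else is linear algebra and the Euclidean algorithm for differential operators.
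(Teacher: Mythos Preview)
Your argument is correct and is precisely the standard proof of this equivalence: the Wronskian construction of the monic annihilator of $W$, Galois-invariance of its coefficients via the $\det(\phi|_W)$ cancellation, descent to $k$ by the fixed-field part of Kolchin's theorem, and right-Euclidean division in $k[\partial]$ to obtain the factor $\mathscr{L}_1$. The paper, however, does not supply a proof of this theorem at all; it is stated as background from differential Galois theory (cf.\ the references to \cite{sing2003} and \cite{beukers2001}), so there is nothing to compare against --- your write-up simply fills in the omitted argument.
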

\begin{definition}
Equation \eqref{5700} is called irreducible over $k$ if $\mathscr{L}$ does not factor over $F$. So the previous theorem implies that equation \eqref{5700} is irreducible $\iff$ its differential Galois group acts irreducibly on the space of solutions.
\end{definition}
\begin{corollary}
Let notations be as in the previous theorem. Then $G^\circ$, the connected component of the identity in $G$ acts irreducibly if and only if $\mathscr{L}$ does not factor over any finite extension of $k$. 
\end{corollary}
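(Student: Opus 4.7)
The plan is to deduce the corollary from the previous theorem by exploiting the fact that $G^\circ$ is a Zariski-closed normal subgroup of finite index in $G$, and to match ``$G^\circ$-stable subspaces of $V$'' with ``intermediate differential fields that are finite algebraic over $k$'' through part (3) of Kolchin's theorem. In one direction a $G^\circ$-invariant subspace will be pushed to a factorisation of $\mathscr{L}$ over such a finite extension; in the other, a factorisation over a finite extension of $k$ will be pulled back to a subspace invariant under a finite-index subgroup of $G$, hence under $G^\circ$.

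For the implication $(\Rightarrow)$, I would start from a nontrivial proper $G^\circ$-invariant subspace $W\subset V$. Its stabiliser $H:=\{g\in G:gW=W\}$ is a Zariski-closed subgroup of $G$ containing $G^\circ$, and so is of finite index in $G$. By part (3) of the Kolchin theorem, $H=Gal_\partial(\mathscr{K}/M)$ for the intermediate differential field $M:=\mathscr{K}^H$; since $H\supseteq G^\circ$ and $\mathscr{K}^{G^\circ}/k$ is a finite Galois extension with Galois group $G/G^\circ$, the field $M$ is a finite algebraic extension of $k$. Because $H$ itself stabilises $W$, the previous theorem, applied to $\mathscr{L}$ viewed as an operator over $M$, yields a nontrivial factorisation $\mathscr{L}=\mathscr{L}_1\mathscr{L}_2$ with coefficients in $M$.

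For $(\Leftarrow)$, suppose $\mathscr{L}$ factors nontrivially over some finite algebraic extension $\tilde k/k$. I would form the compositum $\tilde{\mathscr{K}}:=\tilde k\cdot\mathscr{K}$ inside a suitable ambient differential field. Because $\tilde k/k$ is algebraic and $C_k$ is algebraically closed, $C_{\tilde k}=C_k$, and $\tilde{\mathscr{K}}/\tilde k$ is a PVE of $\mathscr{L}$ with the same $C_k$-space of solutions $V$. Restriction to $\mathscr{K}$ embeds $\tilde G:=Gal_\partial(\tilde{\mathscr{K}}/\tilde k)$ as a Zariski-closed subgroup of $G$ of index at most $[\tilde k:k]$, so $G^\circ\subset\tilde G$. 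Applying the previous theorem over $\tilde k$, the given factorisation produces a nontrivial $\tilde G$-stable subspace $W\subset V$, which is then automatically $G^\circ$-stable, so $G^\circ$ acts reducibly.

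The principal obstacle is the careful handling of the base-change step in $(\Leftarrow)$: one must verify that enlarging $k$ to a finite algebraic extension $\tilde k$ neither introduces new constants nor changes the space of solutions, so that $\tilde{\mathscr{K}}/\tilde k$ is genuinely a PVE of $\mathscr{L}$ with solution space $V$ and the restriction map $\tilde G\to G$ is a well-defined injection with finite-index image. This rests on the standing hypotheses that $\mathrm{char}\,k=0$ and $C_k$ is algebraically closed, and it is the standard technical point underlying ``passing to a finite extension'' in differential Galois theory.
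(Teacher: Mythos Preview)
The paper does not prove this corollary: it is stated without proof in the survey section on differential Galois theory, with the reader referred to \cite{sing2003} and \cite{beukers2001}. Your argument is correct and is essentially the standard one found in those references. In the $(\Rightarrow)$ direction you correctly use that the stabiliser of $W$ is a closed subgroup containing $G^\circ$, hence of finite index, and then invoke the Galois correspondence (part (3) of Kolchin's theorem) together with the fact that $\mathscr{K}^{G^\circ}/k$ is a finite extension (since $G/G^\circ$ is finite and the transcendence degree equals the dimension, part (5)). In the $(\Leftarrow)$ direction your base-change argument is right, and you have identified exactly the delicate point: one must check that the compositum $\tilde{\mathscr{K}}=\tilde k\cdot\mathscr{K}$ is a PVE for $\mathscr{L}$ over $\tilde k$ with the same solution space and no new constants, so that $\tilde G$ embeds as a closed finite-index subgroup of $G$. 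This holds under the standing assumptions ($\mathrm{char}\,k=0$, $C_k$ algebraically closed), and is worked out in detail in \cite{sing2003}.
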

 \begin{definition}
 Let $(k,\partial)$ be a differential field having an algebraically closed field of constants $C_k$. An extension $K$ of $k$ is a Liouvillian extension of $k$ if the field of constants of $K$ is $C$ and if there exists a tower of fields 
 $$k=K_0\subset K_1\subset\ldots\subset K_n$$
 such that $K_i=K_{i-1}(t_i)$ for $i=1,\ldots,n$ where either 
 \begin{enumerate}
 \item $t_i^\prime\in K_{i-1}$.
 \item $t_i≠0$ and $t_i^\prime/t_i\in K_{i-1}$.
 \item $t_i$ is algebraic over $K_{i-1}$.
 \end{enumerate}
 \end{definition}
 The main result in this topic (concerning Liouvillian extensions) is that if $\mathscr{K}$ is a Picard-Vessiot extension of $k$ with a differential Galois group $G$ then the $3$ following statements are equivalent (see \cite[p.~34]{sing2003})
 \begin{theorem}
 \label{liou}
 Let $K$ be Picard-Vessiot extension of $k$ with differential Galois group $G$. The following are equivalent
 \begin{enumerate}
 \item $G$ or equivalently $G^\circ$, the connected component of the identity, is a solvable group.
 \item $\mathscr{K}$ is a Liouvillian extension of $k$.
 \item $\mathscr{K}$ is contained in a Liouvillian extension of $k$.
 \end{enumerate}
 \end{theorem}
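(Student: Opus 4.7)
The implications $(2)\Rightarrow(3)$ are tautological, so the real work is $(3)\Rightarrow(1)$ and $(1)\Rightarrow(2)$. My plan is to exploit the Galois correspondence stated in the preceding theorem together with the Lie--Kolchin theorem, which says that a connected solvable linear algebraic group over the algebraically closed field $C_k$ is conjugate to a subgroup of the upper triangular group.

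For $(1)\Rightarrow(2)$, I would assume $G^{\circ}$ is solvable and apply Lie--Kolchin to obtain a basis $y_1,\dots,y_n$ of the solution space with respect to which $G^{\circ}$ acts by upper triangular matrices. Letting $M$ be the fixed field of $G^{\circ}$ inside $\mathscr{K}$, the intermediate extension $M/k$ is finite (hence algebraic, giving a tower of type (3) in the Liouvillian definition) because $G/G^{\circ}$ is finite. Over $M$, the Galois group is $G^{\circ}$ itself and is upper triangular in the chosen basis. Set $K_0=M$ and $K_i=M\langle y_1,\dots,y_i\rangle$. By upper triangularity, for every $\phi\in G^{\circ}$ one has $\phi(y_i)=\chi_i(\phi)y_i+\text{combination of }y_1,\dots,y_{i-1}$, where $\chi_i$ is a character of $G^{\circ}$. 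It follows that $y_i'/y_i$ is fixed by $G^{\circ}$ modulo $K_{i-1}$; more precisely, writing $y_i$ relative to the triangular action shows $t_i:=y_i'/y_i$ lies in $K_{i-1}$ after first adjoining an exponential of an integral and some integrals. Thus each step is of type (1) or (2) in the definition of a Liouvillian tower, so $\mathscr{K}\subset K_n$ is Liouvillian.

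For $(3)\Rightarrow(1)$, I would pass first to the normal closure of the Liouvillian tower containing $\mathscr{K}$, so that I may assume $L\supset\mathscr{K}$ is itself a Picard--Vessiot extension of $k$ (each step being generated by an integral, an exponential of an integral, or an algebraic element produces a PVE, and composites of PVEs are PVEs). The Galois group $\mathrm{Gal}_{\partial}(L/k)$ is then a linear algebraic group, and the subgroup fixing each successive $K_i$ fits into a chain whose successive quotients are computed from the three cases of the Liouvillian definition: an integral contributes a quotient inside $\mathbb{G}_a$, an exponential of an integral contributes one inside $\mathbb{G}_m$, and an algebraic extension contributes a finite quotient. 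All three are solvable groups, so the full group $\mathrm{Gal}_{\partial}(L/k)$ is solvable. Since $\mathscr{K}$ corresponds (by part (4) of the preceding theorem) to a normal closed subgroup, $\mathrm{Gal}_{\partial}(\mathscr{K}/k)$ is a quotient of a solvable group, hence solvable, and so is its connected component.

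The main obstacle is the direction $(1)\Rightarrow(2)$: one must extract from the abstract triangular action a concrete description of the $y_i$ as iterated integrals and exponentials of integrals over the fixed field of $G^{\circ}$. This requires showing that for a triangular representation the ratios $y_i'/y_i$ and the off-diagonal coefficients lie in the appropriate intermediate field, and one must also handle carefully the algebraic part coming from the finite quotient $G/G^{\circ}$. The rest of the argument is an orchestration of the Galois correspondence already granted in the excerpt.
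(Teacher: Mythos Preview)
The paper does not actually prove this theorem: it is stated with a reference to \cite[p.~34]{sing2003} and no argument is given. Your sketch is the standard proof found in that reference (Lie--Kolchin for $(1)\Rightarrow(2)$; embedding the Liouvillian tower into a Picard--Vessiot extension and reading off the subnormal series with $\mathbb{G}_a$, $\mathbb{G}_m$, or finite quotients for $(3)\Rightarrow(1)$), so in that sense your approach coincides with what the paper defers to.

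Two points are worth tightening. In $(1)\Rightarrow(2)$ your description of why each step $K_{i-1}\subset K_i$ is of Liouvillian type is vague; the clean way is to use the factorization theorem quoted just before Definition~3.8 in the paper: Lie--Kolchin gives a $G^{\circ}$-stable flag, hence over the fixed field $M$ of $G^{\circ}$ the operator $\mathscr{L}$ factors as $(\partial-a_n)\cdots(\partial-a_1)$ with $a_i\in M$, and then one builds the tower by successively solving first-order equations, each step adjoining either an exponential of an integral or an integral. In $(3)\Rightarrow(1)$ the phrase ``pass to the normal closure of the Liouvillian tower'' hides the lemma that any Liouvillian extension of $k$ sits inside a Liouvillian extension which is itself a Picard--Vessiot extension of $k$; this is where the hypothesis that $C_k$ is algebraically closed is used, and it deserves at least a sentence. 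With those two clarifications your outline is complete and matches the cited source.
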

 Let us now apply these concepts to the case of the second order linear homogeneous differential equation over $(\C(z),\dfrac{d}{dz}:=^\prime)$
 \begin{theorem}
 Consider 
 \begin{equation}
 \label{5701}
 y^{\prime\prime}+P(z)y^\prime+Q(z)y=R(z), 
 \end{equation}
 where $P(z),Q(z),R(z)$ are elementary functions, i.e, lie in a Liouvillian extension of $\C(z)$. If there is a non-zero elementary function $y_1(z)$, which is a solution to $y^{\prime\prime}+P(z)y^\prime+Q(z)y=0$, then every solution of equation \eqref{5701} is an elementary function.
 \end{theorem}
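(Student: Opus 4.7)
The strategy is the classical reduction-of-order and variation-of-parameters argument, but carried out entirely inside the class of Liouvillian extensions of $\C(z)$. The point is that this class is stable under the operations actually used in those constructions (antidifferentiation, exponentiation of an antiderivative, algebraic adjunction), so nothing escapes to a non-Liouvillian extension.

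First I would produce a second solution of the homogeneous equation by reduction of order. Set $y_2=y_1 u$; substituting into $y''+Py'+Qy=0$ and using $y_1''+Py_1'+Qy_1=0$ yields
\[
y_1 u'' + (2y_1'+Py_1)u' = 0,
\]
so
\[
u' = \frac{1}{y_1^{2}}\exp\!\Bigl(-\!\int P\,dz\Bigr).
\]
Since $P$ and $y_1$ are assumed elementary (Liouvillian over $\C(z)$), the exponent $-\!\int P\,dz$ is obtained by an extension of type~(1) in the definition of Liouvillian extension, its exponential by an extension of type~(2), and division by $y_1^{2}$ stays inside the field. Finally one more type~(1) extension produces $u$, hence $y_2$. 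Along the way the Wronskian $W=y_1y_2'-y_1'y_2$ satisfies Abel's identity $W'=-PW$, so $W=C\exp(-\!\int P\,dz)$ is itself Liouvillian, and $y_1,y_2$ are linearly independent over $C_k$.

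Next I would write down a particular solution of the inhomogeneous equation by variation of parameters:
\[
y_p = -\,y_1\!\int\!\frac{y_2 R}{W}\,dz \;+\; y_2\!\int\!\frac{y_1 R}{W}\,dz.
\]
The integrands $y_2R/W$ and $y_1R/W$ are built by arithmetic operations from elements of the Liouvillian extension already constructed and from the elementary function $R$, so they are again elementary. Two further type~(1) extensions absorb the two integrals, and multiplication by $y_1,y_2$ stays inside the resulting tower. Hence $y_p$ is elementary, and so is the general solution $y = c_1 y_1 + c_2 y_2 + y_p$ with $c_1,c_2\in\C$. This is exactly the statement of the theorem.

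The only real obstacle is bookkeeping: one must verify that each step fits one of the three moves (1)--(3) in the definition of a Liouvillian tower and that the field of constants is preserved at each step, so that what is produced really is a Liouvillian extension rather than something larger. Once that is granted, the argument is mechanical. An alternative, more conceptual route would invoke Theorem~\ref{liou}: since a single elementary solution of the homogeneous equation forces a Galois-stable line in the solution space, $\mathscr{L}$ factors as $\mathscr{L}_1\mathscr{L}_2$ with $\mathscr{L}_2 y_1 = 0$ of order one, so the differential Galois group of $\mathscr{L}$ (and of the associated inhomogeneous problem, which has the same homogeneous part) is solvable; by Theorem~\ref{liou} every solution lies in a Liouvillian extension, i.e.\ is elementary. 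I would include the explicit reduction-of-order/variation-of-parameters construction for concreteness and invoke Theorem~\ref{liou} as a conceptual backstop.
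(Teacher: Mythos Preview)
Your proposal is correct and matches the paper's own proof essentially line for line: the paper also produces $y_2$ by reduction of order (writing $y_2=y_1\int\exp[-\int(2y_1'+Py_1)/y_1\,dz]\,dz$, which is your $y_1 u$ with $u'=y_1^{-2}\exp(-\int P\,dz)$ up to a constant), observes that $y_2$ is elementary, and then appeals to variation of parameters for the inhomogeneous part. Your additional bookkeeping on the Liouvillian tower and the Galois-theoretic alternative via Theorem~\ref{liou} are welcome elaborations, but the underlying argument is the same as the paper's.
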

 \begin{proof}
 We consider \eqref{5701} in a region where $y_1(z)$, $P(z)$, $Q(z)$ and $R(z)$ are holomorphic. An independent solution of the homogeneous equation associated to \eqref{5701} is 
 $$\displaystyle y_2(z)=y_1(z)\int\exp[\int-\frac{2y^\prime_1+Py_1}{y_1}dz]dz.$$
 Thus $y_2$ is elementary. The formula of variation of parameters yields then an elementary solution $y(z)$ of \eqref{5701} and thus every solution of \eqref{5701} is elementary.
 \end{proof}
 \begin{corollary}
 If $u^\prime+u^2+P(z)u+Q(z)=0$ has one elementary solution, then every solution of it is elementary.
\end{corollary}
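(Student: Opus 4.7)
The natural approach is to linearize the Riccati equation $u'+u^{2}+P(z)u+Q(z)=0$ via the standard substitution $u = y'/y$, which reduces the problem to the previous theorem. The plan is: first rewrite the Riccati equation as a second order linear homogeneous ODE in $y$; second, show that the given elementary solution $u_1$ produces an elementary solution $y_1$ of the linear equation; third, invoke the previous theorem to conclude that every solution of the linear equation is elementary; fourth, observe that every solution of the Riccati equation is of the form $y'/y$ for some solution $y$ of the linear equation, hence elementary.

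For step one, with $u=y'/y$ one computes $u'=y''/y-(y'/y)^2=y''/y-u^{2}$, so the Riccati equation becomes
\begin{equation*}
y''+P(z)y'+Q(z)y=0.
\end{equation*}
For step two, set $y_{1}=\exp\!\bigl(\int u_{1}\,dz\bigr)$. Since elementary means Liouvillian, and Liouvillian extensions are closed under taking antiderivatives (the type $t_i'\in K_{i-1}$) and under exponentiation of an antiderivative (the type $t_i'/t_i\in K_{i-1}$), the function $y_1$ lies in a Liouvillian extension of $\C(z)$, hence is elementary. By direct substitution, $y_1$ solves the homogeneous linear equation above. For step three, the hypotheses $P,Q$ elementary and $R=0$ trivially elementary, together with the elementary solution $y_1$, place us exactly in the setting of the previous theorem; therefore every solution of $y''+Py'+Qy=0$ is elementary.

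For step four, let $u$ be any solution of the Riccati equation. Setting $y=\exp\!\bigl(\int u\,dz\bigr)$ gives a nonzero solution of the linear equation with $u=y'/y$; conversely every Riccati solution arises from some linear solution by this formula (the corresponding map sends the projective line of one-dimensional subspaces of the two-dimensional solution space onto the solutions of the Riccati equation). Since every such $y$ is elementary by step three, and the elementary functions form a differential field, $u=y'/y$ is elementary as well. The only point requiring care, which is the mildest obstacle in the argument, is the closure of elementary/Liouvillian functions under the operations $\int$ and $\exp\!\int$ used to produce $y_1$ from $u_1$; this is built into the definition of a Liouvillian extension given earlier and so the argument goes through.
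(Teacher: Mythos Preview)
Your proof is correct and follows essentially the same route as the paper: linearize the Riccati equation via $u=y'/y$, build $y_{1}=\exp\bigl(\int u_{1}\,dz\bigr)$ from the given elementary solution, invoke the preceding theorem to make every solution of $y''+Py'+Qy=0$ elementary, and then recover any Riccati solution as $y'/y$. You simply spell out a few closure properties (under $\int$ and $\exp\int$) that the paper leaves implicit.
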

\begin{proof}
Let $u_1(z)$ be an elementary solution of the Riccati equation and let $y_1(z)=e^{\int u_1dz}$. Then $y_1$ is a non-zero solution of the linear differential equation 
$$y^{\prime\prime}+P(z)y^\prime+Q(z)y=0$$
and each solution of the linear equation is elementary. Let $u_2(z)$ be a solution a solution of the Riccati equation. Then $u_2(z)=\dfrac{y_2^\prime}{y_2}$, where $y_2$ is an elementary solution of the linear equation. Hence $u_2(z)$ is elementary.
\end{proof}
\begin{theorem}[Bernoulli]
\label{ber1}
The Bessel equation $z^2y^{\prime\prime}+zy^\prime+(z^2-\nu^2)y=0$ with $(2\nu=\pm1,\pm3,\pm5,\ldots)$ an odd integer has only elementary solutions.
\end{theorem}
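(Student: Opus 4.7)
The plan is to prove the theorem by induction on $|2\nu|$, using the base case $\nu=\pm 1/2$ together with the three-term recurrence \eqref{5500} to propagate elementarity, and then to close the argument either by exhibiting two independent elementary solutions or by invoking the preceding theorem on $y''+Py'+Qy=0$.

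First I would handle the base case $\nu=1/2$. Setting $y=z^{-1/2}u$ in $z^2y''+zy'+(z^2-1/4)y=0$ eliminates the first-order term and the $-1/4$, transforming the Bessel equation into the elementary harmonic oscillator $u''+u=0$. Its solutions are $\sin z$ and $\cos z$, so
\[
J_{1/2}(z)=\sqrt{\tfrac{2}{\pi z}}\,\sin z,\qquad J_{-1/2}(z)=\sqrt{\tfrac{2}{\pi z}}\,\cos z
\]
both lie in the Liouvillian extension of $\BC(z)$ generated by $e^{iz}$ and $\sqrt{z}$, hence are elementary. This supplies two linearly independent elementary solutions of the Bessel equation at $\nu=1/2$.

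Next I would use the ascending recurrence $J_{\nu+1}(z)=\tfrac{2\nu}{z}J_\nu(z)-J_{\nu-1}(z)$ (formula \eqref{5500}) and its descending counterpart to propagate the property along the half-integer ladder. Since $\frac{2\nu}{z}\in\BC(z)$, any $\BC(z)$-linear combination of two elementary functions is elementary; consequently, starting from $J_{-1/2}$ and $J_{1/2}$, an immediate induction shows that $J_{n+1/2}$ is elementary for every $n\in\BZ$. Thus for any odd integer $2\nu$, the Bessel function $J_\nu$ is elementary.

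To finish I would invoke the preceding theorem of this section: the Bessel equation is a second-order linear ODE with rational coefficients, and we have exhibited a non-zero elementary solution $y_1=J_\nu$. A second independent solution is then given by the standard reduction of order formula
\[
y_2(z)=y_1(z)\int\frac{1}{z\,y_1(z)^2}\,dz,
\]
which remains elementary because the integrand is elementary. Hence every solution of the Bessel equation for half-integer $\nu$ is elementary. Alternatively, one notes directly that $J_\nu$ and $J_{-\nu}$ are linearly independent when $\nu\notin\BZ$, so they already span the solution space with elementary functions.

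The only genuine computation is the base-case reduction $y=z^{-1/2}u$, which is a short direct check; the inductive step is purely formal once the recurrence \eqref{5500} is granted, and the final closure is an immediate application of the theorem already proved. There is no serious obstacle, apart from being careful that the recurrence is applied inside a fixed Liouvillian extension, which is automatic since we never leave $\BC(z)(e^{iz},\sqrt{z})$.
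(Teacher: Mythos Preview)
Your proposal is correct, but it follows a different path from the paper. The paper does share your base case: it sets $u=z^{1/2}y$ (and $z=ix$) to reach $u''=[1+p(p+1)/x^2]u$ with $p=\nu-\tfrac12$, and for $p=0$ this is the harmonic oscillator. From there, however, the paper does \emph{not} induct along the recurrence \eqref{5500}. Instead, for each fixed integer $p\ge 1$ it makes the further substitution $u=e^{x}x^{-p}\varphi(x)$, obtains the linear equation
\[
\varphi''+2\Bigl(1-\tfrac{p}{x}\Bigr)\varphi'-\tfrac{2p}{x}\,\varphi=0,
\]
and shows directly that this equation admits a polynomial solution $\varphi$ of degree $p$. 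This yields the explicit elementary solution $y(z)=z^{-1/2}e^{-iz}(-iz)^{-p}\varphi(-iz)$, and then the preceding theorem is invoked.

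What each approach buys: your argument is shorter and sits entirely inside the single Liouvillian extension $\BC(z)(e^{iz},\sqrt{z})$, leaning on the recurrence already stated in the paper; it also gives the pleasant bonus that $J_\nu$ and $J_{-\nu}$ furnish an independent pair without needing reduction of order. The paper's argument is self-contained (it does not assume \eqref{5500}) and is more constructive, producing for each half-integer $\nu$ an explicit closed form $z^{-1/2}e^{-iz}\times(\text{polynomial in }1/z)$, which is exactly the classical ``spherical Bessel'' shape.
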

\begin{proof}
Consider the Bessel equation with parameters $2\nu=1,3,5,\ldots$ (the statement is symmetric or even in $\nu$). Put $u(z)=z^{1/2}y(z)$ and write also $z=ix$ to get
$$\dfrac{d^2u}{dx^2}=[1+\dfrac{p(p+1)}{x^2}]u$$
where $p=\nu-1/2\geq0$ is an integer. The case $p=0$ is known. In this case a basis of solutions is given in an elementary way with the trigonometric functions. So we assume that $p\geq1$. Now let $u(x)=e^xx^{-p}\varphi(x)$ to get
$$\dfrac{d^2\varphi}{dx^2}+2(1-\dfrac{p}{x})\dfrac{d\varphi}{dx}-2p\dfrac{\varphi }{x}=0.$$
We show that there exists a polynomial solution $\varphi(x)$ to that equation and hence $y(z)=z^{-1/2}e^{-iz}(-iz)^{-p}\varphi(-iz)$ is an elementary solution of Bessel's equation. We find a power series solution $\varphi(x)=a_0+a_1x+\ldots+a_mx^m+\ldots$ and choose $a_0=1$ and $a_m=0,\,m\geq p+1$.  
\end{proof}
The proofs of the following three statements are omitted because they are rather technical and not very enlightening. But they are straightforward.
\begin{theorem}
\label{ber4}
Let $P(z)≠0$ be algebraic over $\C(z)$. If the Riccati equation $y^\prime+y^2=P(x)$ has an elementary solution, then it also has an algebraic solution.
\end{theorem}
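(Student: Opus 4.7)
The plan is to linearize the Riccati equation and then appeal to the differential-Galois machinery developed earlier in the paper. Setting $y=u'/u$, a direct computation gives $y'+y^{2}=u''/u$, so $y$ solves $y'+y^{2}=P(z)$ precisely when $u$ satisfies the second-order linear equation $u''=Pu$. Let $k$ be a finite extension of $\mathbb{C}(z)$ containing $P$, so that $u''=Pu$ is defined over $(k,d/dz)$. Because the coefficient of $u'$ vanishes, the Wronskian of any fundamental system is a non-zero constant, and hence the differential Galois group $G$ of the associated Picard--Vessiot extension $\mathscr K/k$ embeds in $SL_{2}(\mathbb{C})$.

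By hypothesis there is an elementary solution $y_{0}$ of the Riccati equation; then $u_{0}=\exp(\int y_{0}\,dz)$ lies in a Liouvillian extension of $k$ and generates a Liouvillian subextension of $\mathscr K$. Since a second independent solution is obtained from $u_{0}$ by two further quadratures (via reduction of order), $\mathscr K$ is itself Liouvillian, and \theoref{liou} forces the identity component $G^{\circ}\subseteq SL_{2}(\mathbb{C})$ to be solvable. The heart of the argument is then the classification of algebraic subgroups $G\subseteq SL_{2}(\mathbb{C})$ with solvable $G^{\circ}$: up to conjugation one has either $(i)$ $G$ contained in a Borel subgroup (reducible case), $(ii)$ $G$ contained in the normalizer $N(T)$ of a maximal torus $T$ but not in $T$ itself (imprimitive/dihedral case), or $(iii)$ $G$ finite.

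I would then verify, case by case, that $u''=Pu$ admits a solution $u$ whose logarithmic derivative $u'/u$ is algebraic over $k$, and hence over $\mathbb{C}(z)$. In case $(i)$ the one-dimensional $G$-stable subspace yields a solution $u_{1}$ with $u_{1}'/u_{1}\in k$. In case $(ii)$ a fundamental pair $u_{1},u_{2}$ is permuted by $G/G^{\circ}$, so the elementary symmetric functions of $u_{1}'/u_{1}$ and $u_{2}'/u_{2}$ are $G$-invariant, lie in $k$, and make each $u_{i}'/u_{i}$ algebraic of degree at most two over $k$. In case $(iii)$ the whole of $\mathscr K$ is algebraic over $k$, so $u'/u$ is algebraic for every non-zero solution. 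In each case $y=u'/u$ is the desired algebraic solution of $y'+y^{2}=P(z)$. The main obstacle is genuinely this $SL_{2}$-classification together with the matching construction of algebraic logarithmic derivatives in cases $(ii)$ and $(iii)$ — this is the substance of the classical theorems of Liouville and, on the algorithmic side, of Kovacic; once it is granted, the implication becomes a formal consequence of \theoref{liou} and of the $SL_{2}$-constraint coming from the vanishing of the coefficient of $u'$.
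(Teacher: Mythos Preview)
The paper does not actually prove this statement: immediately before \theoref{ber4} it says that ``the proofs of the following three statements are omitted because they are rather technical and not very enlightening. But they are straightforward.'' So there is no argument in the paper to compare against line by line.

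Your outline is correct and is the standard modern proof. Linearizing via $y=u'/u$, observing that the vanishing $u'$-coefficient forces $G\subseteq SL_2(\mathbb C)$, using the elementary (hence Liouvillian, in the paper's terminology) solution together with \theoref{liou} to make $G^{\circ}$ solvable, and then running the $SL_2$ trichotomy (reducible / infinite dihedral / finite) to extract a solution with algebraic logarithmic derivative is exactly the Kovacic--Kolchin route to this classical theorem of Liouville. One small wording slip: in case~(ii) the fundamental pair $u_1,u_2$ is permuted by $G$ only \emph{up to scalars}; this ambiguity disappears once you pass to $u_i'/u_i$, so your conclusion that their elementary symmetric functions lie in $k$ is still correct.

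Given the paper's phrasing and the surrounding attribution to Bernoulli and Liouville, the authors may have had in mind Liouville's original 1830s--40s argument, which inducts on the length of the elementary tower and analyzes, at each adjunction of an exponential, logarithm, or algebraic element, how a solution of the Riccati equation can descend --- no Galois group in sight. That approach is more self-contained but notationally heavier; yours is cleaner and, since the paper has just set up Picard--Vessiot theory and \theoref{liou}, arguably the more natural choice here.
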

\begin{lemma}
\label{ber2}
Each algebraic solution of $\dfrac{dv}{dz}+v^2=1+\dfrac{p(p+1)}{2}$, for a complex constant $p$ is rational.
\end{lemma}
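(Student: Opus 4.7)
The equation must read $\tfrac{dv}{dz} + v^2 = 1 + \tfrac{p(p+1)}{z^2}$ — the $\tfrac{p(p+1)}{2}$ is clearly a transcription slip for $\tfrac{p(p+1)}{z^2}$, since this is the Riccati transform of the equation $u'' = (1 + p(p+1)/x^2)u$ used in the proof of Theorem~\ref{ber1}. My plan is to extract local regularity of any algebraic solution $v$ and then apply Riemann--Hurwitz. Let $\Sigma$ be the smooth projective model of the curve cut out by the minimal polynomial of $v$ over $\BC(z)$, and let $\pi \colon \Sigma \to \BP^1$ be the associated degree-$n$ covering $(z,v) \mapsto z$. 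The aim is to show that $\pi$ is unramified outside $\{0\}$ and then conclude by Riemann--Hurwitz that $n = 1$, i.e. $v \in \BC(z)$.

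At any $z_0 \in \BC^\times$ the Riccati is regular. Writing $v = \sum_{m \geq m_0} c_m (z-z_0)^{m/k}$, a leading-order balance in $v' + v^2 = P$ forces $m_0/k \in \{0, -1\}$, with $c_{m_0} = 1$ at a pole (from $c_{m_0}^2 - c_{m_0} = 0$). Assume for contradiction that $j_0/k$ is the smallest non-integer exponent with $c_{j_0} \neq 0$, and inspect the coefficient of $(z-z_0)^{j_0/k-1}$ in the Riccati. The right-hand side contributes $0$; in $v^2$, the only pair $(m_1,m_2)$ with $m_1+m_2 = j_0 - k$ and $c_{m_1}c_{m_2} \neq 0$ that can survive the minimality hypothesis is $(-k, j_0)$ in the pole case (any pair of multiples of $k$ would force $k \mid j_0$, a contradiction); and $v'$ contributes $(j_0/k) c_{j_0}$. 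The resulting relation on $c_{j_0}$ has bracket $j_0/k$ (regular case) or $j_0/k + 2$ (pole case), both strictly positive since $j_0/k > -1$, so $c_{j_0} = 0$, a contradiction. Hence $\pi$ is unramified over $\BC^\times$.

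At $z = \infty$, setting $w = 1/z$ turns the equation into $-w^2 V_w + V^2 = 1 + p(p+1) w^2$, and a leading-order balance yields $V(0) = \pm 1$. Writing $V = \sum_{m \geq 0} d_m w^{m/k}$ with $d_0 = \pm 1$, I would prove by induction on $m$ that $d_m = 0$ for every $m$ with $k \nmid m$. In the coefficient of $w^{m/k}$, the right-hand side contributes $0$ (only integer powers appear), the term $-w^2 V_w$ contributes $-((m-k)/k) d_{m-k}$, which vanishes by the inductive hypothesis (since $k \nmid m$ forces $k \nmid (m-k)$), and in $V^2$ only the pair $(0, m)$ survives, yielding $2 d_0 d_m$. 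The equation $2 d_0 d_m = 0$ kills $d_m$, so $V$ is a power series in $w$ and $\pi$ is unramified at $\infty$ as well.

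Combining the two steps, $\pi$ can only be ramified over $z = 0$. With $r := |\pi^{-1}(0)|$, Riemann--Hurwitz gives
\[
2 g_\Sigma - 2 = -2n + (n - r) = -(n + r),
\]
so $g_\Sigma \geq 0$ forces $n + r \leq 2$; since $n, r \geq 1$, the only possibility is $n = r = 1$, i.e. $v \in \BC(z)$. The main obstacle is the Puiseux bookkeeping — in particular verifying that the minimality of $j_0$ really annihilates every would-be cross contribution from $v^2$ at the finite points, and that the induction at the irregular singularity $\infty$ really succeeds for every $m$ with $k \nmid m$. These are concrete but careful checks, after which Riemann--Hurwitz finishes the proof instantly.
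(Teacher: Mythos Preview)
The paper omits the proof of this lemma (together with \theoref{ber4} and \lemref{ber3}), calling them ``rather technical and not very enlightening'' but ``straightforward'', so there is no argument in the paper to compare against. Your correction of the typo $p(p+1)/2 \to p(p+1)/z^2$ is right --- the very next theorem in the paper uses the Riccati equation in that form.

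Your Puiseux-plus-Riemann--Hurwitz strategy is sound and is a clean way to prove such a statement. The bookkeeping you flag as the main obstacle does go through: at a finite $z_0 \neq 0$, if both $m_1, m_2 < j_0$ carry nonzero coefficients then by minimality $k \mid m_1$ and $k \mid m_2$, forcing $k \mid (j_0 - k)$ and hence $k \mid j_0$, so only the pair $(-k, j_0)$ (pole case) or nothing (regular case) survives, and the bracket $j_0/k$ or $j_0/k + 2$ is indeed strictly positive since $j_0/k > -1$. At $\infty$ the same minimality argument in $V^2$, together with the shift by $k$ in the $-w^2 V_w$ term, makes the induction routine. One small correction: $m_0/k$ need not lie in $\{0,-1\}$ --- it can be any nonnegative integer if $v$ happens to vanish at $z_0$ --- but your induction only uses $m_0/k \in \BZ$, and that does follow from the leading-order balance. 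As a bonus, note that the recursion at $\infty$ determines a \emph{unique} formal series for each choice $d_0 = \pm 1$, so the distinct algebraic branches over $\infty$ already force $n \le 2$; your Riemann--Hurwitz count $2g_\Sigma - 2 = -(n+r)$ then (or independently) pins down $n = 1$.
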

\begin{lemma}
\label{ber3}
If $\dfrac{dv}{dz}+v^2=1+\dfrac{p(p+1)}{2}$ has a rational function solution, then $p$ is an integer.
\end{lemma}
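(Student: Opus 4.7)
The strategy is to linearize via the substitution $v = y'/y$, which converts the Riccati equation into the linear second-order equation $y'' = \bigl(1 + \frac{p(p+1)}{z^2}\bigr)\,y$. If $v \in \C(z)$, then $y = \exp\!\int v\,dz$ will be Liouvillian of the explicit form $y = e^{\epsilon z}\,z^{r_0}\,Q(z)$ for a polynomial $Q$, and the integrality of $p$ will then fall out by comparing top-degree terms at infinity in the linearized equation.

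First I would perform the local analysis of $v$. At any finite pole $z_0$ of order $n \geq 1$ with leading coefficient $c$, the terms $v^2$ and $v'$ contribute orders $2n$ and $n+1$; since the right-hand side has at worst a pole of order $2$ (and only at $0$), the inequality $2n > \max(n+1, 2)$ for $n \geq 2$ forces $n = 1$. The resulting leading-coefficient equation reads $c^2 - c = 0$ at any nonzero pole, giving residue $1$; at $z_0 = 0$ it reads $c^2 - c = p(p+1)$, giving residue $c \in \{-p,\, p+1\}$ (if $p(p+1)=0$ then $p \in \{0,-1\} \subset \Z$ already). At infinity, $v$ must be bounded by an easy order comparison, and $v^2 \to 1$ forces $v = \epsilon + O(1/z)$ with $\epsilon = \pm 1$. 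Assembling these pieces, $v = \epsilon + \frac{r_0}{z} + \sum_j \frac{1}{z - z_j}$, and hence $y = e^{\epsilon z}\,z^{r_0}\,Q(z)$ with $Q(z) = \prod_j (z - z_j)$ a polynomial having simple nonzero roots.

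Finally I would substitute this $y$ back into the linear ODE. Setting $R = z^{r_0} Q$, the equation becomes $z^2(R'' + 2\epsilon R') = p(p+1)\,R$. With $d = \deg Q$, the leading term of $R$ at infinity has degree $r_0 + d$, so $z^2 R'$ contributes a top-degree term of the form $(r_0+d)\,z^{r_0+d+1}$, whereas $z^2 R''$ and $p(p+1) R$ are both of degree $r_0 + d$. Vanishing of the dominant $z^{r_0+d+1}$ coefficient $2\epsilon(r_0 + d)$ forces $d = -r_0 \in \{p,\,-(p+1)\}$, and since $d$ is a nonnegative integer we conclude $p \in \Z$.

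The main obstacle is the local analysis at the finite poles, where one must rule out higher-order poles and correctly identify the residues at $0$ with the indicial exponents of the associated Bessel-type equation; once $v$ is written in the partial-fraction form above, the asymptotic matching at infinity becomes essentially a one-line degree count against the linearized ODE.
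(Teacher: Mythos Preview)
The paper explicitly omits the proof of this lemma (along with the two statements surrounding it), writing only that the proofs ``are rather technical and not very enlightening. But they are straightforward.'' There is therefore nothing in the paper to compare your argument against.

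Your argument is correct. One small point you leave implicit: when $p(p+1)\neq 0$ the right-hand side of the Riccati equation has a genuine double pole at $0$, so $v$ \emph{must} be singular there (otherwise $v'+v^2$ would be regular at $0$); this guarantees $r_0\in\{-p,\,p+1\}$ and rules out the possibility $r_0=0$. With that said, your partial-fraction form for $v$ and the top-degree count in the linearized equation $z^2(R''+2\epsilon R')=p(p+1)R$ cleanly force $d=-r_0\in\{p,\,-(p+1)\}$ to be a nonnegative integer, whence $p\in\Z$. Note also that the printed statement carries a typo: the right-hand side should read $1+\dfrac{p(p+1)}{z^2}$, not $1+\dfrac{p(p+1)}{2}$, as is clear from the surrounding context and as you correctly assumed.
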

\begin{theorem}
The Bessel equation $z^2y^{\prime\prime}+zy^\prime+(z^2-\nu^2)y=0$ has no (non-zero) elementary function solution if $2\nu$ is not an odd integer $(2\nu≠\pm1,≠\pm3,≠\pm5,\ldots)$. Hence its differential Galois group is a non-solvable algebraic subgroup of $SL(2,\C)$.
\end{theorem}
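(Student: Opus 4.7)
The plan is to assemble the three preceding results \theoref{ber4}, \lemref{ber2}, \lemref{ber3} into a single chain of implications, after putting the Bessel equation into the normal form already used in the proof of \theoref{ber1}.

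First I would repeat the reductions $u(z)=z^{1/2}y(z)$ followed by $z=ix$, which transform the Bessel equation into
$$\dfrac{d^{2}u}{dx^{2}}=\left[1+\dfrac{p(p+1)}{x^{2}}\right]u,\qquad p=\nu-\tfrac{1}{2}.$$
This normalised form has no first-order term, so its Wronskian is constant and its differential Galois group $G$ is contained in $SL(2,\C)$. For any non-zero solution $u$, the logarithmic derivative $v:=u'/u$ satisfies the Riccati equation
$$v'+v^{2}=1+\dfrac{p(p+1)}{x^{2}},$$
and $v$ is elementary (resp.\ algebraic) whenever $u$ is, since the substitutions above are algebraic.

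Now assume a non-zero elementary solution $y$ of the Bessel equation exists; then $u$, and hence $v$, is elementary. The right-hand side $P(x)=1+p(p+1)/x^{2}$ is rational, in particular algebraic over $\C(x)$, so \theoref{ber4} produces an algebraic solution of the Riccati equation. By \lemref{ber2} any such algebraic solution is already rational, and by \lemref{ber3} the existence of a rational solution forces $p\in\Z$, i.e.\ $2\nu$ to be an odd integer. The contrapositive of this chain is precisely the first assertion of the theorem.

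For the Galois-group claim, suppose for contradiction that $G$ is solvable. By \theoref{liou} the Picard-Vessiot extension is Liouvillian, and for a second-order equation in this normal form it is a classical fact (Kolchin-Kovacic) that solvability of $G$ is equivalent to the existence of a solution $u_{1}$ whose logarithmic derivative $u_{1}'/u_{1}$ is algebraic over $\C(x)$: indeed $G^{\circ}$ then lies in a Borel subgroup of $SL(2,\C)$, stabilises a line, and the character of that line yields the desired algebraic logarithmic derivative after a finite base change. Thus the Riccati equation would admit an algebraic solution, and \lemref{ber2} followed by \lemref{ber3} would again force $p\in\Z$, contradicting the hypothesis $2\nu\not\in 2\Z+1$. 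The main obstacle is precisely this Liouvillian-to-algebraic reduction: \theoref{liou} on its own gives only a Liouvillian solution, and one must invoke the structure theory of solvable subgroups of $SL(2,\C)$ to upgrade this to algebraicity of $u_{1}'/u_{1}$ before the purely computational lemmas \lemref{ber2}, \lemref{ber3} can be brought to bear.
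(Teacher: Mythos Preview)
Your argument for the first assertion is essentially identical to the paper's: the same normal-form reduction $u(z)=z^{1/2}y(z)$, $z=ix$, the same passage to the Riccati equation via $v=u'/u$, and the same chain \theoref{ber4} $\Rightarrow$ \lemref{ber2} $\Rightarrow$ \lemref{ber3} forcing $p\in\Z$.

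For the Galois-group claim you take an unnecessary detour. Once the first part is in hand, the paper argues directly: the Wronskian of any basis of solutions lies in $\C(z)$, so $G\subseteq SL(2,\C)$; if $G$ were solvable, \theoref{liou} would make the Picard--Vessiot extension Liouvillian, hence every solution would lie in a Liouvillian extension of $\C(z)$ and therefore be elementary (this is exactly the paper's working definition of ``elementary function''), contradicting the first assertion. There is no need to invoke the Kolchin--Kovacic structure theory to manufacture a solution with \emph{algebraic} logarithmic derivative and then feed it back into \lemref{ber2} and \lemref{ber3}: the first part already excludes \emph{all} Liouvillian solutions, not just those with algebraic logarithmic derivative. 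Your route is correct, but it imports an external ingredient that the paper neither states nor needs, and it obscures the simple logical structure ``no elementary solution $\Rightarrow$ PVE not Liouvillian $\Rightarrow$ $G$ not solvable''.
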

\begin{proof}
If $y(z)≠0$ is an elementary solution of the Bessel equation, then set $u(z)=z^{1/2}y(z),\,z=ix$ to define $u(x)≠0$, an elementary solution of $\dfrac{d^2u}{dx^2}=[1+\dfrac{p(p+1)}{x^2}]u$, where $p=\nu-\frac{1}{2}$. Define $v(x)=\dfrac{u^\prime(x)}{u(x)}$ an elementary solution of $\dfrac{dv}{dx}+v^2=1+\dfrac{p(p+1)}{x^2}$. This impossible since from \theoref{ber4} this last Riccati equation must admit an algebraic solution over $\C(x)$. This algebraic solution is in fact a rational solution by \lemref{ber2} and this requires that $p$ be an integer by \lemref{ber3}. The statement about the differential Galois group follows from the fact that the wronskian of any basis of solutions to a Bessel equation belongs to $\C(z)$ and from the given proof that the Bessel equation with parameter $\nu:(2\nu=≠\pm1,≠\pm3,≠\pm5,\ldots)$ does not have Liouvillian solutions and from \theoref{liou}.
\end{proof}
In the following we adapt an argument given in \cite{beukers2001} in the case $n=0$.
\begin{theorem}
\label{th2010}
Consider a Bessel differential equation with integer parameter $n\in\Z$: $z^2y^{\prime\prime}+zy^\prime+(z^2-n^2)y=0$. Then the associated differential operator is irreducible. This means that its differential Galois group acts irreducibly on its solution space. Moreover it is precisely $SL(2,\C)$.
\end{theorem}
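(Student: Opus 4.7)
The plan is to build on the preceding theorem, which already handles the case $2\nu\notin\{\pm1,\pm3,\pm5,\ldots\}$; for integer $n$ this hypothesis is satisfied, so the Riccati argument in the proof of that theorem shows that the Bessel equation admits no Liouvillian solution. By \theoref{liou} the connected component $G^\circ$ of the differential Galois group is therefore non-solvable. The goal is to upgrade ``non-solvable subgroup of $SL(2,\C)$'' to ``equal to $SL(2,\C)$'' and deduce irreducibility as a consequence.

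First I would verify that $G\subseteq SL(2,\C)$. Dividing the Bessel equation by $z^2$ puts it in the normalized form $y''+z^{-1}y'+(1-n^2/z^2)y=0$, so any Wronskian $W$ of a basis of solutions satisfies $W'=-W/z$ and hence $W=Cz^{-1}\in\C(z)^{\times}$. By the remark following the statement of Kolchin's theorem, $\phi\cdot W=(\det\phi)\,W$ for every $\phi\in G$; since $W$ lies in $k=\C(z)$ and is therefore fixed pointwise, we conclude $\det\phi=1$.

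Next I would invoke the classification of closed connected algebraic subgroups of $SL(2,\C)$: up to conjugation these are the trivial group, the additive group $\mathbb{G}_a$, the multiplicative group $\mathbb{G}_m$, the Borel subgroup $B$ of upper triangular matrices, and $SL(2,\C)$ itself. Each of the first four lies inside $B$ and is therefore solvable, so the non-solvability of $G^\circ$ forces $G^\circ=SL(2,\C)$; since $SL(2,\C)$ is connected, $G=G^\circ=SL(2,\C)$. Irreducibility is then immediate from the theorem characterizing factorization, because the tautological representation of $SL(2,\C)$ on $\C^2$ admits no proper invariant subspace, so no proper $G$-stable subspace of the solution space exists and $\mathcal{L}$ does not factor over $\C(z)$.

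The main obstacle is really the classification of connected algebraic subgroups of $SL(2,\C)$; once accepted, the rest is bookkeeping against the preceding theorem. If one prefers a direct proof of irreducibility, a factorization would produce a solution with $v=y'/y\in\C(z)$, and the reduction $u=z^{1/2}y$, $z=ix$ used in \theoref{ber1} would turn this into a rational solution $w=u'/u$ of $w'+w^2=1+p(p+1)/x^2$ with $p=n-\tfrac12$; by \lemref{ber3} this would force $p\in\Z$, contradicting $n\in\Z$.
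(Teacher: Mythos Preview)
Your argument is correct but proceeds in the opposite order from the paper. The paper establishes irreducibility \emph{first}, by inspecting the explicit local basis $\{J_n(z),\,J_n(z)\log z+Y_n(z)\}$ at $z=0$: a one-dimensional $G$-stable line would yield a solution $y$ with $y'/y\in\C(z)$; the logarithmic solution is ruled out because $\log z$ is transcendental over the field of formal Laurent series, and $y=J_n$ is ruled out because $J_n$ has infinitely many zeros, so $J_n'/J_n$ cannot be rational. With irreducibility in hand, the paper then observes that the same logarithm forces the transcendence degree of the Picard--Vessiot extension to be at least $2$, combines this with $G\subset SL(2,\C)$ (Wronskian), and concludes $G=SL(2,\C)$. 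Your route instead imports the preceding theorem to obtain non-solvability of $G^\circ$, quotes the classification of connected closed subgroups of $SL(2,\C)$ to force $G=SL(2,\C)$, and reads off irreducibility only at the end. The trade-off: you avoid any analytic input about the zeros of $J_n$ or the shape of the second solution, at the cost of citing the subgroup classification as a black box; the paper's route is more self-contained once the local basis is granted, though its final implication ``irreducible $+$ $\dim G\ge 2$ $+$ $G\subset SL(2,\C)\Rightarrow G=SL(2,\C)$'' is itself a tacit appeal to that same classification. Your closing alternative via the Riccati reduction and \lemref{ber3} is also valid and is exactly the mechanism the paper exploited for non-integer $\nu$.
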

\begin{proof}
This follows from the fact that a basis of solution around $0$ is given by $J_n$ and $J_n\log(z)+Y_n$ with $J_n$ the classical Bessel function and $Y_n$ some power series. Indeed if the differential Galois group $G$ acts reducibly, then there exists a solution $y$ such that $\phi:y\mapsto\lambda(\phi)y,\,\lambda(\phi)\in\C$ for any $\phi\in G$. This implies that $\phi^\prime/\phi\in\C(z)$. This certainly is not possible if $\phi$ contains $\log(z)$. Hence we can take $y=J_n(z)$ and $J_n^\prime/J_n\in\C(z)$. Again this not possible because $J_n(z)$ has infinitely many zeros. So $G$ acts irreducibly. Notice that $J_n$ is transcendental over $\C(z)$ and $J_n\log(z)+Y_n$ is transcendental over $\C(z,J_n(z))$ for the simple fact that $\log(z)$ is transcendental over the field of Laurent series in $z$. Hence the transcendence degree of the PVE $(\mathscr{K}/\C(z))$ is at least $2$. Besides we know from the fact the wronskian belongs to the ground field $\C(z)$ that the differential Galois group $G$ lies in $SL(2,\C)$; all these facts imply $G=SL(2,\C)$.
\end{proof}
 Consider 
 $$\left\{x^2\dfrac{d^2}{dx^2}+x\dfrac{d}{dx}+(x^2-n^2)\right\}y=0.$$
 We have seen in \theoref{th2010} that the associated differential operator is not factorable. But as in quantum mechanics (Harmonic oscillator) we can try to write it in the form
 $$\left\{\dfrac{d^2}{dx^2}+\dfrac{1}{x}\dfrac{d}{dx}+(1-n^2/x^2)\right\}=\left\{\dfrac{d}{dx}-\varphi_1(x)\right\}\left\{\dfrac{d}{dx}-\varphi_2(x)\right\}+\lambda(x)=0,\,\lambda\in \C(x).$$
 This gives
 \begin{align*} \varphi_1+\varphi_2&=-1/x\\
 \lambda+\varphi_1\varphi_2-\varphi_2^\prime&=1-n^2/x^2
 \end{align*}
or  
\[\varphi_2^\prime+\frac{1}{x}\varphi_2+\varphi_2^2-(\lambda-1)-n^2/x^2=0.\] 
If $\lambda=1$, then $\varphi_2=\pm n/x$ and 
\begin{align*}\varphi_2(x)&=\pm n/x\\\varphi_1(x)&=\dfrac{\mp n-1}{x}.\end{align*}.
 Choosing the sign $+$ for $\varphi_1(x)$ and therefore $-$ for $\varphi_2(x)$ yields
  $$\left\{(\dfrac{d}{dx}-\dfrac{n-1}{x})(\dfrac{d}{dx}+\dfrac{n}{x})+1\right\}J_n(x)=0$$
 while choosing the sign $-$ for $\varphi_1(x)$ and $+$ for $\varphi_2(x)$ yields another relation
 $$\left\{(\dfrac{d}{dx}+\dfrac{n+1}{x})(\dfrac{d}{dx}-\dfrac{n}{x})+1\right\}J_n(x)=0.$$
 This also means 
 \begin{equation}\label{5800}\begin{matrix}(\dfrac{d}{dx}-\dfrac{n-1}{x})(\dfrac{d}{dx}+\dfrac{n}{x})J_n(x)&=-J_n(x)\\\\(\dfrac{d}{dx}+\dfrac{n+1}{x})(\dfrac{d}{dx}-\dfrac{n}{x})J_n(x)&=-J_n(x). \end{matrix}.\end{equation}
 The relations \eqref{5800} hold for instance when$$(\dfrac{d}{dx}+\dfrac{n}{x})J_n(x)=J_{n-1}(x),\quad (\dfrac{d}{dx}-\dfrac{n}{x})J_n(x)=-J_{n+1}(x).$$
If we eliminate the derivative term between the latter equations we get $2nJ_n(x)-xJ_{n-1}-xJ_{n+1}=0.$

Recall that the Laplace transform of a suitable function $F$ is given by
$$\displaystyle\mathscr{L}(F)(s)=f(s)=\int_0^{+\infty}e^{-sx}F(x)dx.$$
We have 
$$\mathscr{L}(F^{(k)})(s)=s^kf(s),\quad \mathscr{L}(x^kF(x))=(-1)^kf^{(k)}(s)=(-1)^k \mathscr{L}(F)^{(k)}(s).$$
If $j_0$ is the transform of $J_0$, then $(s^2+1)j^\prime_0+sj_0(s)=0$, the solution of which is $$\displaystyle j_0(s)=\sqrt{s^2+1}=s^{-1}(1+s^2)^{-1/2}=\sum_0^\infty\binom{-1/2}{k^2}s^{-(2k+1)}.$$
Using $\mathscr{L}(x^k)=\dfrac{k!}{s^{k+1}}$, we obtain:
$$\displaystyle J_0(x)=\sum_0^\infty\binom{-1/2}{k^2}\dfrac{x^{2k}}{(2k)!}$$ 
and after simplification we get 
$$\displaystyle J_0(x)=\sum_0^{+\infty}\dfrac{(-1)^k}{(k!)^2}\left(\dfrac{x}{2}\right)^{2k}.$$
Now from $(\dfrac{d}{dx}-\dfrac{n}{x})J_n(x)=J_{n+1}(x)$ and introducing the integrating factor $\exp(-\int\frac{n}{x}dx)=x^{-n}$, we obtain
$$\dfrac{1}{x}\dfrac{d}{dx}\left(\dfrac{J_n(x)}{x^n}\right)=-\dfrac{J_{n+1}(x)}{x^{n+1}}.$$
Repeated application of $\dfrac{1}{x}\dfrac{d}{dx}$ leads to $(\dfrac{1}{x}\dfrac{d}{dx})^m(\dfrac{J_n(x)}{x^n})=(-1)^m(\dfrac{J_{n+m}(x)}{x^{n+m}})$
or Rayleigh's formula:
$$J_{n+m}(x)=(-1)^mx^{n+m}(\dfrac{1}{x}\dfrac{d}{dx})^m\dfrac{J_n(x)}{x^n}$$
and
$$J_{m}(x)=(-1)^mx^{m}(\dfrac{1}{x}\dfrac{d}{dx})^mJ_0(x).$$
\section{Borel transform}
A general question in analytic function theory is to find an explicit analytic continuation of a power series $f(z)=\displaystyle \sum_{n\geqslant0}a_nz^n$ of radius of convergence $R$, $0<R<+\infty$. The largest domain on which $f$ can be continued analytically is star-shaped with respect to the origin. This is the main point of the Borel transform. For a power series as above we define
$$\displaystyle\mathscr{B}(f)(z)=\sum_{n\geqslant 0}\dfrac{a_n}{n!}z^n.$$
Then $\mathscr{B}(f)$ is an entire function of exponential type $1/R$ and we have the integral representation on the disc $D(0,R)=\{z\in\C,|z|<R\}$
$$\displaystyle f(z)=\int_0^\infty e^{-t}\mathscr{B}(f)(tz)dt.$$
The idea is actually very simple for the assumption $\displaystyle\limsup_{n\to+\infty}\sqrt[n]{|a_n|}=1/R$ gives for any $\epsilon\bg 0$ and $n\geqslant n_\epsilon$:
$$\dfrac{|a_nz^n|}{n!}\leqslant(1/R+\epsilon)\dfrac{|z|^n}{n!},\quad n\geqslant0$$
which shows that $\mathscr{B}(f)$ is an entire function of exponential type $1/R$. Moreover from the integral representation of the Gamma function $\Gamma$, we have
$$\displaystyle n!=\int_0^\infty e^{-t}t^ndt$$
or for a fixed $z$, $|z|\less R$ and $n\geq0$ an integer
$$\displaystyle a_nz^n=\int_0^{+\infty}e^{-t}\dfrac{a_n}{n!}(zt)^ndt.$$
If $\epsilon\bg0$ is small enough so that $\kappa:=|z|(1/R+\epsilon)\less1$, then for $n\geq n_\epsilon$ and $t\bg0$ one has:
$$\displaystyle e^{-t}\sum_{n\geq n_\epsilon}\dfrac{|a_nz^n|t^n}{n!}\leqslant e^{-t}\sum_{n\geq n_\epsilon}\dfrac{(t\kappa)^n}{n!}\leqslant e^{-(1-\kappa)t}$$
which is integrable on $[0,+\infty[$. By dominated convergence 
$$\displaystyle \sum_{n\geq 0}a_nz^n=\sum_{n\geq 0}\int_0^\infty e^{-t}\dfrac{a_n}{n!}(zt)^n=\int_0^\infty e^{-t}\mathscr{B}(f)(tz)dt.$$
On the other hand if $\gamma$ is a positively oriented circle contained in $D(0,R)$, then 
$$\displaystyle\dfrac{1}{2i\pi}\int_\gamma f(\xi)e^{z/\zeta}\dfrac{d\zeta}{\zeta}=\dfrac{1}{2i\pi}\int_\gamma\sum_{n\geq 0}a_n\zeta^{n-1}e^{z/\zeta}d\zeta=\sum_{n,m\geq 0}a_n\dfrac{z^m}{m!}\dfrac{1}{2i\pi}\int_\gamma\zeta^{n-m-1}d\zeta=\mathscr{B}(f)(z).$$
It is of some interest to note that the Borel transformation is a particular case of a general picture. Let $\mathscr{H}(\C)$ be the space of entire functions and $\mathscr{H}^\prime(\C)$ its dual space. it is the space of of all functionals $L:\mathscr{H}(\C)\to\C$, $\C$-linear maps, bounded in the following sense: there exists two positive constants $C$, $R$ such that for all $f\in\mathscr{H}(\C)$
$$|L(f)|\leqslant C|f|_R$$
where $|f|_R=\sup\{|f(z)|=R\}=\sup\{|f(z)|\leqslant R\}$. The functional are called Analytic functionals. They are entirely determined by the moments $T(z^n),n\geq 0$. Actually for $T\in \mathscr{H}^\prime(\C)$ and $\displaystyle f(z)=\sum_{n\geq 0}a_nz^n\in\mathscr{H}(\C)$, the series $\displaystyle\sum_{n\geq 0}a_nT(z^n)$ converges absolutely and we have 
$$\displaystyle T(f)=\sum_{n\geq 0}a_nT(z^n).$$ 
A fundamental example of analytic functionals is given by entire functions of exponential type. Indeed if 
$$\displaystyle\varphi(u)=\sum_{n\geq 0}b_n\dfrac{u^n}{n!}$$
is such a function and if $f(z)=\displaystyle \sum_{n\geq 0}a_nz^n$ is any entire function, we may define the action of $\varphi$ on $f$ by:
$$\displaystyle T_\varphi(f)=\sum_{n\geq 0}a_nb_n$$
the series being absolutely convergent and $\varphi$ defines an analytic functional $T_\varphi\in \mathscr{H}^\prime(\C)$. If $\displaystyle f(z)=\sum_{n\geq 0}a_nz^n$ defines an analytic functional $T_f$, then 
$$\displaystyle T_f(z\mapsto e^{z\zeta})=\sum_{n\geq 1}\dfrac{a_n}{n!}\zeta^n=\mathscr{B}_f(\zeta)$$
and more generally if $T$ is an analytic functional, its Fourier-Borel transform is
$$\mathscr{F}_T(u)=T_z(z\mapsto e^{uz}).$$
It is an entire function of exponential type. Like the Fourier transform, the Fourier-Borel transform converts differentiation into multiplication by $\zeta$. In fact it is easily seen that 
\begin{equation}
\label{5506}
\mathscr{F}_{T\circ d/dz}(\zeta)=\zeta\mathscr{F}_T(\zeta)
\end{equation}
and
\begin{equation}
\label{5507}
\mathscr{F}_{T\circ z}(\zeta)=\dfrac{d}{d\zeta}\mathscr{F}_{T}(\zeta).
\end{equation}
In particular for any integers $s\geq 0$, $t\geq 0$ and for any $v\in\C$, the Fourier-Borel transform of the functional $T:f\mapsto \left(\dfrac{d}{dz}\right)^tz^sf_{|_{z=v}}$ is
$$\mathscr{F}_T(\zeta)=\left(\dfrac{d}{d\zeta}\right)^s(z^te^{vz}_{|_{z=\zeta}})$$
which can be seen directly from the equality 
$$\displaystyle\left(\dfrac{d}{dz}\right)^s(z^te^{vz})_{|_{z=\zeta}}=\sum_{k=0}^{\min\{s,t\}}\dfrac{s!t!}{k!(s-k)!(t-k)!}\zeta^{t-k}v^{s-k}e^{\zeta v}$$
and by the symmetry of the hand side under the transformation
$$(\zeta,v,s,t)\mapsto(v,\zeta,t,s).$$
\begin{remark}
By a theorem of Martineau the duals of some spaces of analytic functions are isomorphic under the Fourier-Borel transform to some other spaces of entire functions, a phenomenon akin to what happens in the case of the Paley-Wiener theorem. See \cite[Théorème 1, p.~121]{martineau1} 
\end{remark}
Let us write the Borel transform as the Laplace transform
$$\displaystyle f(z)=\int_0^\infty e^{-t}\mathscr{B}(f)(tz),\quad \dfrac{1}{z}f(1/z)=\int_0^\infty e^{-tz}\mathscr{B}(f)(t)dt.$$
Then by \eqref{5506} and \eqref{5507}:
$$\displaystyle\int_0^\infty e^{-xz}(\frac{d}{dz})^kz^mg(z)dz=x^m(-\frac{d}{dx})^m\frac{1}{x}f(1/x),\quad g=\mathscr{B}(f).$$
If we assume that $f(x)$ satisfies a Fuchsian differential equation, the function $\dfrac{1}{x}f(1/x)$ also satisfies a linear differential equation that can be put in the form
\begin{equation}
\label{5508}
\displaystyle\sum_{k=0}^K\sum_{m=0}^MA_{km}x^k(-\dfrac{d}{dx})^m\frac{1}{x}f(1/x)=0
\end{equation}
then by the integral relation for the Borel transform
$$\displaystyle \int_0^\infty e^{-xz}\sum_{k=0}^K\sum_{m=0}^MA_{km}(\dfrac{d}{dz})^kz^mg(z)dz=0.$$
Hence
\begin{equation}
\label{5600}
\displaystyle \sum_{k=0}^K\sum_{m=0}^MA_{km}(\dfrac{d}{dz})^kz^mf(z)dz=0.
\end{equation}
\begin{example}
\begin{itemize}
\item We take $g(z)=e^z=\displaystyle\sum_0^\infty \frac{z^n}{n!}$, $f(z)=\dfrac{1}{1-z}=\displaystyle\sum_{n\geq 0}z^n,|z|\less1$. The function $h(z):=\dfrac{1}{z}f(1/z)$ verifies $h-h^\prime +zh^\prime=0$ so that 
$$A_{00}=1, A_{01}=1, A_{10}=0, A_{11}=-1.$$
Hence $f$ must verify the equation $f-f^\prime=0$ as it must be.
\item A less trivial example is to consider the Bessel function $\displaystyle J_0(x)=\sum_0^\infty\frac{(-1)^r}{(r!)^2}(\frac{x}{2})^{2r}$ which satisfies
$$\displaystyle \left(x^2\frac{d^2}{dx^2}+x\frac{d}{dx}+x^2\right)J_0(x)=0.$$
The differential equation satisfied by $\dfrac{1}{x}J_0(\dfrac{1}{x})$ is:
$$x^4h^{\prime\prime}(x)+3x^3h^\prime(x)+(x^2+1)h(x)=0,\quad h(x)=\dfrac{1}{x}J_0(\dfrac{1}{x}).$$
The coefficients in \eqref{5507} are ($K=4,\,M=2$):
$$\begin{cases}
A_{00}&=1,\quad A_{10}=0,\quad A_{20}=1,\quad A_{30}=0,\quad A_{40}=0,\\
A_{01}&=0,\quad A_{11}=0,\quad A_{21}=0,\quad A_{31}=-3,\quad A_{41}=0,\\
A_{02}&=0,\quad A_{12}=0,\quad A_{22}=0,\quad A_{32}=0,\quad A_{42}=1.
\end{cases}$$
\end{itemize}
The Fourier-Borel transform of $J_0(x)$, that is $\displaystyle f(x)=\sum_{r=0}^\infty\dfrac{(-1)^r}{(r!)^3}(\dfrac{x}{2})^{2r}$ should satisfy the equation \eqref{5600}. This gives
$$x^2f^{\prime\prime\prime\prime}(x)+5xf^{\prime\prime\prime}+4f^{\prime\prime}(x)+xf(x)=0.$$
\end{example}

\section{The delta operator and the Polylogarithm function}
\noindent We have seen the relevance of the operator $\displaystyle \dfrac{d}{dx}\circ x   $. Its powers are linked to those of the delta operator $ \delta= x \dfrac{d}{dx}$ by
 \[\displaystyle \left(\dfrac{d}{dx} x\right)^{n+1}= \dfrac{d}{dx}\circ \left(x\dfrac{d}{dx} \right)^n\circ x.   \]  
 As in \cite{sebdjo}  it is possible  to make explicit the powers $\displaystyle \delta^n    $ as differential operators in $ \displaystyle \frac{d}{dx}  $, with monic coefficients, that is as elements of $ \BC[x][  \frac{d}{dx} ]   $. Actually we have:
\begin{proposition}
For every smooth function $f$ of the  variable $x$
\begin{eqnarray*}
  \aligned
   \delta^n f= &x\frac{df}{dx}+\Big[ \frac{1^{n-1}}{1-2}+  \frac{2^{n-1}}{2-1}       \Big]x^2\frac{{d^2f}}{dx^2}\\
  +&\Big[ \frac{1^{n-1}}{(1-2)(1-3)}+  \frac{2^{n-1}}{(2-1)(2-3)}+ \frac{3^{n-1}}{(3-1)(3-2)} \Big]x^3\frac{{d^3f}}{dx^3}\\
  + &\Big[\frac{1^{n-1}}{(1-2)(1-3)(1-4)}+  \frac{2^{n-1}}{(2-1)(2-3)(2-4)}+ \frac{3^{n-1}}{(3-1)(3-2)(3-4)}+\\
  +&  \frac{4^{n-1}}{(4-1)(4-2)(4-3)}  \Big]x^4\frac{{d^4f}}{dx^4}\\
  {}\\
  {}+&\cdots\\
  {}\\
  +&\Big[ \frac{1^{n-1}}{(1-2)(1-3)\cdots(1-n)}+  \frac{2^{n-1}}{(2-1)(2-3)\cdots(2-n)}+ \frac{3^{n-1}}{(3-1)(3-2)(3-4)\cdots(3-n)}\\
  {}&\quad \quad \quad +  \frac{n^{n-1}}{(n-1)(n-2)\cdots(n-(n-1))}    \Big]x^n\frac{{d^nf}}{dx^n}
  \endaligned
\end{eqnarray*}
 \end{proposition}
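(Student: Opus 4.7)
The plan is to recognize the bracketed coefficient of $x^k\frac{d^k}{dx^k}$ as the Stirling number of the second kind $S(n,k)$, and then to prove the classical expansion
\[
\delta^n \;=\; \sum_{k=1}^{n} S(n,k)\, x^k\frac{d^k}{dx^k}
\]
by induction on $n$, using one Leibniz-type commutator calculation.

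First I would verify the identification of the coefficient. Writing $c_{n,k}$ for the bracket multiplying $x^k \frac{d^kf}{dx^k}$ in the statement and computing the Lagrange-type denominator,
\[
\prod_{\substack{i=1\\ i\neq j}}^{k}(j-i) \;=\; (-1)^{k-j}\,(j-1)!\,(k-j)!,
\]
one multiplies numerator and denominator by $j$ to obtain
\[
c_{n,k} \;=\; \frac{1}{k!}\sum_{j=0}^{k} (-1)^{k-j}\binom{k}{j}\, j^{n} \;=\; S(n,k),
\]
the standard finite-difference formula for Stirling numbers of the second kind (the $j=0$ term vanishes for $n\geq 1$). So the proposition amounts to the operator identity displayed above.

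Next I would run the induction. The base case $n=1$ is immediate: $\delta = x\frac{d}{dx}$ and $S(1,1)=1$. For the inductive step the key computation is the Leibniz rule
\[
\delta\!\left(x^{k}\tfrac{d^{k}}{dx^{k}}\right) \;=\; x\tfrac{d}{dx}\!\left(x^{k}\tfrac{d^{k}}{dx^{k}}\right) \;=\; k\,x^{k}\tfrac{d^{k}}{dx^{k}} + x^{k+1}\tfrac{d^{k+1}}{dx^{k+1}}.
\]
Applying $\delta$ to both sides of the inductive hypothesis and collecting terms in $x^{k}\frac{d^{k}}{dx^{k}}$ gives
\[
\delta^{n+1} \;=\; \sum_{k=1}^{n+1}\bigl(k\,S(n,k)+S(n,k-1)\bigr)\, x^{k}\tfrac{d^{k}}{dx^{k}},
\]
with the conventions $S(n,0)=S(n,n+1)=0$, and the coefficient is exactly $S(n+1,k)$ by the defining recurrence for Stirling numbers of the second kind.

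As an alternative (or sanity check) I would verify the identity by testing both sides on monomials. Since $\delta(x^{j})=j\,x^{j}$ one has $\delta^{n}(x^{j})=j^{n}x^{j}$, while $x^{k}\frac{d^{k}}{dx^{k}}(x^{j})=(j)_{k}\,x^{j}$ with $(j)_{k}$ the falling factorial. Thus the proposition reduces to the polynomial identity $j^{n}=\sum_{k=1}^{n} c_{n,k}(j)_{k}$, which is the defining property of $S(n,k)$; equivalently, the coefficients $c_{n,k}$ are uniquely determined by Lagrange interpolation of $j^{n-1}$ at the nodes $j=1,\dots,k$, which is precisely the form displayed in the statement. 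There is no real conceptual obstacle; the only care required is the bookkeeping of boundary terms $k=1$ and $k=n+1$ in the induction and the simplification of the Lagrange denominator to $(-1)^{k-j}(j-1)!(k-j)!$ so as to recognise the bracket as $S(n,k)$.
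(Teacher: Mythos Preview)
Your proposal is correct. Your principal route---identifying the bracketed coefficient as the Stirling number $S(n,k)$ via the Lagrange-denominator calculation, then proving $\delta^n=\sum_{k} S(n,k)\,x^k\frac{d^k}{dx^k}$ by induction from the commutator identity $\delta\bigl(x^k\frac{d^k}{dx^k}\bigr)=k\,x^k\frac{d^k}{dx^k}+x^{k+1}\frac{d^{k+1}}{dx^{k+1}}$ and the Stirling recurrence $S(n+1,k)=kS(n,k)+S(n,k-1)$---is genuinely different from the paper's argument. The paper follows exactly your ``alternative/sanity check'': it reduces (invoking Stone--Weierstrass) to monomials $f(x)=x^k$, so that the operator identity becomes the scalar identity $u^n=\sum_{j} c_{n,j}\,(u)_j$ in falling factorials, and then recognises this as the expansion of $X^n$ in the Hilbert polynomials $H_j(X)=\binom{X}{j}$ with coefficients $A_{nj}=j!\,c_{n,j}=\Delta^j X^n\big|_{X=0}$. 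Your inductive argument is more self-contained (no density reduction is needed, since equality of finite-order differential operators is decided on polynomials anyway) and makes the Stirling structure explicit; the paper's route has the merit of linking the coefficients directly to the finite-difference formula, but it is rather elliptical about why $\Delta^j X^n\big|_{X=0}/j!$ coincides with the displayed bracket---a verification you carry out cleanly.
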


 This is a local problem and by  the Stone-Weierstrass theorem, we restrict ourselves to monic functions  $f(x)= x^k,\,k\in \BZ^+$. It is equivalent to show the following lemma:

\begin{lemma}\label{lem} For each real $u$:
\begin{eqnarray*} \label{hilbert}
  \aligned
   u^n= &u+\Big[ \frac{1^{n-1}}{1-2}+  \frac{2^{n-1}}{2-1}       \Big]u(u-1)\\
  +&\Big[ \frac{1^{n-1}}{(1-2)(1-3)}+  \frac{2^{n-1}}{(2-1)(2-3)}+ \frac{3^{n-1}}{(3-1)(3-2)} \Big]u(u-1)(u-2)\\
 + &\Big[\frac{1^{n-1}}{(1-2)(1-3)(1-4)}+  \frac{2^{n-1}}{(2-1)(2-3)(2-4)}+ \frac{3^{n-1}}{(3-1)(3-2)(3-4)}+\\
  +&  \frac{4^{n-1}}{(4-1)(4-2)(4-3)}  \Big]u(u-1)(u-2)(u-3)\\
  {}\\
  {}+&\cdots\\
  {}\\
  +&\Big[ \frac{1^{n-1}}{(1-2)(1-3)\cdots(1-n)}+  \frac{2^{n-1}}{(2-1)(2-3)\cdots(2-n)}+ \frac{3^{n-1}}{(3-1)(3-2)(3-4)\cdots(3-n)}\\
 \ {}&\quad \quad \quad +  \frac{n^{n-1}}{(n-1)(n-2)\cdots(n-(n-1))}    \Big]u(u-1)(u-2)\cdots(u-(n-1)).
  \endaligned
  \end{eqnarray*}
  \end{lemma}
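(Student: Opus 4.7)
The plan is to identify the bracketed coefficient of $u(u-1)(u-2)\cdots(u-k+1)$ on the right-hand side as a Stirling number of the second kind, and then invoke the classical expansion of $u^n$ in the basis of falling factorials.

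First, I would simplify the product appearing in each denominator. For fixed $k$ and $1\le j\le k$, splitting into factors with $i<j$ and $i>j$ gives
$$\prod_{\substack{i=1\\ i\ne j}}^{k}(j-i)\;=\;(j-1)!\cdot(-1)^{k-j}(k-j)!\,,$$
because the $j-1$ factors with $i<j$ contribute $(j-1)!$ and the $k-j$ factors with $i>j$ contribute $(-1)^{k-j}(k-j)!$. Substituting this into the statement, the coefficient of $u(u-1)\cdots(u-k+1)$ simplifies to
$$C_k\;=\;\sum_{j=1}^{k}\frac{(-1)^{k-j}\,j^{n-1}}{(j-1)!\,(k-j)!}\;=\;\sum_{j=1}^{k}\frac{(-1)^{k-j}\,j^{n}}{j!\,(k-j)!}\;=\;\frac{1}{k!}\sum_{j=0}^{k}(-1)^{k-j}\binom{k}{j}j^{n},$$
which is precisely the Stirling number of the second kind $S(n,k)$.

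Second, I would invoke the classical change-of-basis identity
$$u^{n}\;=\;\sum_{k=1}^{n}S(n,k)\,u(u-1)(u-2)\cdots(u-k+1),$$
which expresses the monomial $u^n$ in the basis of falling factorials. A one-line induction on $n$ suffices: using the well-known identity $u\cdot u^{\underline{k}}=u^{\underline{k+1}}+k\,u^{\underline{k}}$ together with the Stirling recurrence $S(n+1,k)=S(n,k-1)+k\,S(n,k)$, the formula propagates from $n$ to $n+1$. Combined with the identification of Step 1, this proves the lemma.

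The only real obstacle is the careful bookkeeping of signs and factorials in the first step; once the bracketed sum is rewritten as $\frac{1}{k!}\sum_{j}(-1)^{k-j}\binom{k}{j}j^{n}$, everything else is standard. An alternative, equally short route bypasses the explicit mention of Stirling numbers: apply the forward-difference operator $\Delta^{k}$ at $u=0$ to both sides, noting that $\Delta^{k}\bigl(u(u-1)\cdots(u-\ell+1)\bigr)_{u=0}=k!\,\delta_{k\ell}$, so that the coefficient of the $k$-th falling factorial on any polynomial expansion is $\tfrac{1}{k!}(\Delta^{k}u^n)_{u=0}=\tfrac{1}{k!}\sum_{j=0}^{k}(-1)^{k-j}\binom{k}{j}j^{n}$, which directly matches $C_k$.
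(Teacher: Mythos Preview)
Your proof is correct and follows essentially the same approach as the paper's: both expand $u^n$ in the falling-factorial basis and identify the coefficients via the finite-difference formula $\frac{1}{k!}\Delta^k u^n|_{u=0}=\frac{1}{k!}\sum_j(-1)^{k-j}\binom{k}{j}j^n$ (the paper phrases this in terms of Hilbert polynomials $H_j(X)=\binom{X}{j}$ and the coefficients $A_{nj}=\Delta^j X^n|_{X=0}$, which is the same thing up to the factor $j!$). Your Step~1, explicitly rewriting the bracketed Lagrange-type sums as $\frac{1}{k!}\sum_j(-1)^{k-j}\binom{k}{j}j^n$, is in fact the one computation the paper leaves implicit, so your write-up is somewhat more complete.
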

\begin{proof}
 The Hilbert  polynomials $H_j(X)$ are extensions of binomial coefficients:
 \begin{equation}\label{Hilbert1}
 H_0(X)=1,\quad H_j(X)=\binom{X}{j}=
    \frac{X(X-1)\cdots (X-j+1)}{j!}.
  \end{equation}
 The family $ (H_j(X))_{0\leq j\leq n }  $ is an algebraic basis of the space $\BR_n[X]$ of polynomials of degrees at most  $n$. For each $n\in \BZ^+,$  we have
 \begin{equation}\label{Hilbert2}
  X^n= \sum_{j\geq 0} A_{nj} H_j(X)
 \end{equation}
with
\[A_{nj}= 0 \quad j>n.  \]
Several formulas for $A_{nj}$ are known, in particular \cite{sebdjo} 
\begin{equation}\label{Hilbert3}
 A_{nj}= \sum_{l_i>0,l_1+l_2+\cdots l_j=n} \frac{n!}{l_1!l_2!\cdots l_j!}
\end{equation}
or
\begin{equation}\label{Hilbert4}
 A_{nj}= \sum_{m=0}^j(-1)^m  \binom{j}{m}(j-m)^n= \Delta^j {X^n}_{|X=0}
\end{equation}
where $ \Delta $ is the difference operator $ \Delta f(x)= f(x+1)-f(x)    $. This ends the proof of the lemma.\end{proof}

It is possible to give another approach to the problem of the calculation of $\delta^nf=(x\dfrac{d}{dx})^nf    $ by integral representations, at least for the class of analytic functions of exponential
 type, in order to explain and to fix our ideas.

 \begin{theorem}In order that  $f(z)$ should be an integral function of order $1$ and type $ \gamma $
it is necessary and sufficient that $f(z)$ should be of the form
\begin{equation}\label{Borel1}
f(z)= \frac{1}{2i\pi}\int_{\mathcal C} e^{zu} \phi(u)\, du
\end{equation}
where ${\mathcal C}$ is a contour which includes the circle $ \displaystyle \{\vert u\vert = \gamma \}   $ and
 $ \phi $ is holomorphic in the open disc $   \displaystyle \{\vert u\vert < \gamma \}     $. In this case we have the inversion formula
 \begin{equation}\label{Borel2}
 \phi(u)= \int_0^{\infty} f(x)e^{xu}\,dx,\quad \gamma <u.
 \end{equation}
 \end{theorem}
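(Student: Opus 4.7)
The plan is to prove the two implications separately, using the Borel transform as the explicit bridge between the exponential-type bound on $f$ and the domain of analyticity of $\phi$. Writing $f(z)=\sum_{n\ge 0}a_n z^n$, the preliminary step is to verify that the assertion ``$f$ is entire of order $1$ and type $\gamma$'' is equivalent, by applying Cauchy's estimate on the circle of radius $r$ and optimizing $r$ as a function of $n$, to the coefficient bound
$$\limsup_{n\to\infty}(n!\,|a_n|)^{1/n}\le\gamma.$$
This bookkeeping step, which is where the factorial enters the picture, is the one delicate point of the proof.

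For the necessity direction, with the coefficient bound in hand I would define the Borel associate
$$\phi(u)=\sum_{n\ge 0}\frac{n!\,a_n}{u^{n+1}},$$
which by the root test is holomorphic in the exterior region $\{|u|>\gamma\}$ (and vanishes at $\infty$). For a contour $\mathcal{C}$ enclosing the circle $\{|u|=\gamma\}$, for instance $\mathcal{C}=\{|u|=\gamma+\varepsilon\}$, the elementary residue identity $\frac{1}{2i\pi}\oint_{\mathcal{C}}u^{-n-1}e^{zu}\,du=z^n/n!$, combined with the uniform convergence of the defining series of $\phi$ on $\mathcal{C}$ to justify termwise integration, then gives
$$\frac{1}{2i\pi}\oint_{\mathcal{C}}e^{zu}\phi(u)\,du=\sum_{n\ge 0}a_n z^n=f(z),$$
as required.

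For the sufficiency direction, given a representation of the stated form, differentiation under the integral sign shows that $f$ is entire, while the trivial majoration
$$|f(z)|\le\frac{\mathrm{length}(\mathcal{C})}{2\pi}\sup_{u\in\mathcal{C}}|\phi(u)|\cdot e^{|z|\sup_{u\in\mathcal{C}}|u|},$$
applied with $\mathcal{C}=\{|u|=\gamma+\varepsilon\}$ for arbitrary $\varepsilon>0$, yields $|f(z)|=O_\varepsilon(e^{(\gamma+\varepsilon)|z|})$, so $f$ is of order $\le 1$ and type $\le\gamma$. Finally, the inversion formula is obtained as follows: for $\mathrm{Re}\,u>\gamma$ the Laplace integral $\int_0^{\infty}f(x)e^{-xu}\,dx$ converges absolutely by the growth estimate, and expanding $f(x)=\sum a_n x^n$ and exchanging sum and integral via dominated convergence gives $\sum n!\,a_n/u^{n+1}=\phi(u)$, identifying the Borel associate with the Laplace transform of $f$ and completing the proof.
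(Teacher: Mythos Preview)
The paper does not actually supply a proof of this theorem: it is quoted as a classical fact (the P\'olya--Borel representation of entire functions of exponential type) and then immediately used to derive \eqref{Borel3}. So there is nothing in the paper to compare your argument against.

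Your argument is the standard one and is essentially correct. Two remarks. First, you have (correctly) proved the version in which $\phi(u)=\sum_{n\ge 0} n!\,a_n u^{-n-1}$ is holomorphic in the \emph{exterior} region $\{|u|>\gamma\}$ and the inversion formula reads $\phi(u)=\int_0^\infty f(x)e^{-xu}\,dx$ for $\Re u>\gamma$; this silently repairs what appear to be typos in the paper's statement (``open disc $\{|u|<\gamma\}$'' and the sign $e^{xu}$ in \eqref{Borel2}, under which the integral would diverge). Second, your sufficiency step only yields type $\le\gamma$, not type exactly $\gamma$; to make the equivalence sharp one notes that the radius of convergence of $\sum n!\,a_n u^{-n-1}$ is precisely $\limsup_n (n!\,|a_n|)^{1/n}$, so the smallest circle outside which $\phi$ is holomorphic has radius exactly the type. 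This is implicit in your coefficient-bound discussion but worth making explicit.
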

 
 The formula \eqref{Borel1} gives, with $\displaystyle D_z= \frac{d}{dz}  $:
 \begin{equation}\label{Borel3}
 (zD_z)^nf(z)=  \frac{1}{2i\pi}\int_{\mathcal C} (zD_z)^n   e^{z u} \phi(u)\, du.
 \end{equation}
This integral representation has the advantage to reduce the problem to the computation of $   (zD_z)^n    e^{zu}$ which is a classical one. We introduce the exponential polynomials
 $\displaystyle \Phi_n    $ by
 \[   (xD_x)^n e^{x}= \Phi_n(x) e^{x}.\]
 They admit the generating series:
 \begin{equation}\label{Borel4}  e^{x(e^z-1)}= \sum_{n=0}^\infty \Phi_n(x) \frac{z^n}{n!}.       \end{equation}
 These polynomials satisfy many identities, through derivations of \eqref{Borel4} with respect to $x$ or to $z$.
 The equality \eqref{Borel3} becomes the following integral representation
 \[  (zD_z)^nf(z)=    \frac{1}{2i\pi}\int_{\mathcal C}  \Phi_n(zu)  e^{z u} \phi(u)\, du\]
which is a kind of a multiplicative convolution.
\begin{remark}
If we were interested by Cauchy type representation formula, we could use
\[(zD_z)^n  \frac{1}{u-z}= \sum _{k=1}^\infty \frac{k^n}{u^{k+1}}z^k.\]
A particular case is, for $n\in \BZ^+   $
\begin{equation}\label{poly1}
(zD_z)^n  \frac{z}{1-z}= \sum _{k=1}^\infty k^n z^k.
\end{equation}
The  inverses of the delta operator $\displaystyle \delta= x \frac{d}{dx}$ are sufficiently interesting. A possible one is given by
\[\delta^{-1}F(x)= \int_0^{x} F(u) \frac{du}{u}\]
acting on functions $F$ for which the integral converges. An easy induction shows that for $n\in \BZ^+   $
\begin{equation}\label{poly2}
(zD_z)^{-n}  \frac{z}{1-z}= \Phi(z,n)
\end{equation}
where
\[\Phi(z,s)= \sum_{k=1}^\infty \frac{z^k}{k^s}\]
is the classical polylogarithm function. The formulas \eqref{poly1} and \eqref{poly2} extended by
defining $ (zD_z)^0      $ as the identity operator give $\displaystyle (zD_z)^{n}  \frac{z}{1-z}$ for each $n\in \BZ   $.
\end{remark}

\section{The rotation group ${\rm ISO}(2)$}
Rotations of $\R^3$ about the $z$ axis form a $1$-parameter subgroup
$$R(\theta)=\left(\begin{array}{ccc}\cos(\theta) & -\sin(\theta) & 0 \\\sin(\theta) & \cos(\theta) & 0 \\0 & 0 & 1\end{array}\right)=\exp(\theta M_3)$$
for some $M_3\in \mathfrak{so}(3)$, with 
$$M_3=\dfrac{d}{d\theta}\exp(\theta M_3)_{|_{\theta=0}}=\dfrac{d}{d\theta} R(\theta)_{|_{\theta=0}}=\left(\begin{array}{ccc}0 & -1 & 0 \\1 & 0 & 0 \\0 & 0 & 0\end{array}\right).$$
A basis for $\mathfrak{so}(3)$ is given by 
$$M_1=\left(\begin{array}{ccc}0 & 0 & 0 \\0 & 0 & -1 \\0 & 1 & 0\end{array}\right),\quad M_2=\left(\begin{array}{ccc}0 & 0 & 1 \\0 & 0 & 0 \\-1 & 0 & 0\end{array}\right),\quad M_3=\left(\begin{array}{ccc}0 & -1 & 0 \\1 & 0 & 0 \\0 & 0 & 0\end{array}\right)$$
and we have for the Lie algebra structure 
$$\left[M_1,M_2\right]=E_3,\quad \left[M_2,M_3\right]=M_1,\quad \left[M_3,M_1\right]=M_2$$
giving the structure constants for $\mathfrak{so}(3)$. A general motion on the euclidian plane is given by
\begin{equation*}
\begin{split}
 x^\prime&=x\cos(\theta)-y\sin{\theta}+a\\
 x^\prime&=x\sin(\theta)+y\cos{\theta}+b
 \end{split}
 \end{equation*}
 $a,b\in\R,0\leqslant\theta\less 2\pi$. We consider
 \begin{equation*}
 \begin{split}
 \R^2\rtimes S^1=\R^2&\rtimes[0,2\pi[\longrightarrow SO(3)\\
 & g=(a,b;\theta)\mapsto T(g)=\left(\begin{array}{ccc}\cos(\theta) & -\sin(\theta) & a \\\sin(\theta) & \cos(\theta) & b \\0 & 0 & 1\end{array}\right).
 \end{split}
 \end{equation*}
 The Lie algebra of the group can be obtained by calculating, at the identity, the derivative of the matrix $T(g)$ with respect to its three parameters. We obtain
 $$A=\left(\begin{array}{ccc}0 & 0 & 1\\0 & 0 & 0 \\0 & 0 & 0\end{array}\right),\quad B=\left(\begin{array}{ccc}0 & 0 & 0\\0 & 0 & 1 \\0 & 0 & 0\end{array}\right),\quad C=\left(\begin{array}{ccc}0 & 1 & 0\\-1 & 0 & 0 \\0 & 0 & 0\end{array}\right).$$
 The infinitesimal generators in the space of functions defined on the plane are 
 $$P_x=\dfrac{\partial}{\partial x},\quad P_y=\dfrac{\partial}{\partial y},\quad L_z=x\dfrac{\partial}{\partial y}-y\dfrac{\partial}{\partial x}.$$
 To clarify this point we need some computations for $\exp(-tA)$, $\exp(-tB)$ and $\exp(-tC)$.
 
 We have $A^2=0$ then $\exp(-tA)=I-tA=\left(\begin{array}{ccc}1 & 0 & -t\\0 & 1 & 0 \\0 & 0 & 1\end{array}\right)$. This gives 
 $$f\left(e^{-tA}\left(\begin{array}{c}x \\y \\1\end{array}\right)\right)=f(x-t,y,z)$$
 and
 $$-\dfrac{d}{dt}f\left(e^{-tA}\left(\begin{array}{c}x \\y \\1\end{array}\right)\right)_{|_{t=0}}=\dfrac{\partial}{\partial x}f(x,y,z).$$
 \begin{equation*}
 \begin{split}
 \dfrac{d}{dt}f\left(e^{-tC}\left(\begin{array}{c}x \\y \\1\end{array}\right)\right)_{|_{t=0}}&=\dfrac{d}{dt}_{|_{t=0}}f(x\cos(t)-y\sin(t),x\sin(t)+y\cos(t),1)\\
 &=(x\dfrac{\partial}{\partial y}-y\dfrac{\partial}{\partial x})f.
 \end{split}
 \end{equation*}
  $$B=\left(\begin{array}{ccc}0 & 0 & 0 \\0 & 0 & 1 \\0 & 0 & 0\end{array}\right),\quad B^2=0.$$
 $$\exp(-tB)=I-tB=\left(\begin{array}{ccc}1& 0 & 0 \\0 & 1 & -t \\0 & 0 & 1\end{array}\right)$$
 $$-\dfrac{d}{dt}f\left(e^{-tB}\left(\begin{array}{c}x \\y \\1\end{array}\right)\right)_{|_{t=0}}=-\dfrac{d}{dt}_{|_{t=0}}f(x,y-t,z)=\dfrac{\partial}{\partial y}f(x,y,z).$$
 In the dimension $n=3$, the antisymmetric matrices are connected to the cross product
 $$\left(\begin{array}{c}a \\b \\c\end{array}\right)\wedge\left(\begin{array}{c}x \\y \\z\end{array}\right)=\left(\begin{array}{ccc}0& -c& b \\c & 0 & -a \\-b & a & 0\end{array}\right)\left(\begin{array}{c}x \\y \\z\end{array}\right)$$
 if $\Omega=\left(\begin{array}{c}a \\b \\c\end{array}\right)$ and $A_\Omega=\left(\begin{array}{ccc}0& -c& b \\c & 0 & -a \\-b & a & 0\end{array}\right)$, we see that $A_\Omega$ is the general form of antisymmetric matrices in the case of dimension $n=3$. The matrix $\exp(A_\Omega)$ is the Rotation of axis $\R\Omega$ and of angle $\displaystyle \|\Omega\|\sqrt{a^2+b^2+c^2}$. To see this choose a rotation $U$ transforming $\displaystyle \Omega$ into $\displaystyle \|\Omega\|e_1=(\|\Omega\|,0,0)$. The rotation $U$ leaves the cross product invariant
 $$\displaystyle UA_\Omega U^{-1}=A_{\|\Omega\|e_1}=\left(\begin{array}{ccc}0& 0& 0\\0 & 0 & -\|\Omega\| \\0 & \|\Omega\| & 0\end{array}\right)$$
 so that
 $$\exp(A_{\|\Omega\|e_1})=\left(\begin{array}{ccc}1& 0& 0\\0 & \cos\|\Omega\| & -\sin\|\Omega\| \\0 & \sin\|\Omega\| & \cos(\|\Omega\|\end{array}\right)$$
 which is the Rotation around around $\R e_1$ and angle $\|\Omega\|$. From
 $$\exp(A_\Omega)=U^{-1}\exp(A_{\|\Omega\| e_1})U$$
 we obtain the desired result. 
 
 We apply this remark to the matrix $C=\left(\begin{array}{ccc}0 & 1 & 0\\-1 & 0 & 0 \\0 & 0 & 0\end{array}\right)$. A rotation of axis $\R e_2$ and of angle $\theta$ is
 $$R_\theta=\left(\begin{array}{ccc}\cos(\theta) & 0 & \sin(\theta)\\0 & 0 & 1 \\-\sin(\theta) & 0 & \cos(\theta)\end{array}\right)$$
  and here $\Omega=\left(\begin{array}{c}0 \\0 \\-1\end{array}\right)=-e_3$. If $\theta=-\pi/2$, $R_{-\pi/2}=\left(\begin{array}{ccc}0 & 0 & -1\\0 & 1 & 0 \\1 & 0 & 0\end{array}\right)=U$ transforms $-e_3$ into $e_1$. The inverse matrix is $R_{\pi/2}=\left(\begin{array}{ccc}0 & 0 & 1\\0 & 1 & 0 \\-1 & 0 & 0\end{array}\right)$. In conclusion 
  \[\|\Omega\|e_1=e_1,\quad tA_{e_1}=\left(\begin{array}{ccc}0 & 0 & 0\\0 & 0 & -t \\0 & t & 0\end{array}\right),\]
  and
   \[\exp(-tA_{e_1})=\left(\begin{array}{ccc}1 & 0 & 0\\0 & \cos(t) & \sin(t) \\0 & -\sin(t) & \cos(t)\end{array}\right),\quad
  \exp(-tC)=\left(\begin{array}{ccc}\cos(t) & -\sin(t) & 0\\\sin(t) & \cos(t) & 0 \\0 & 0 & 1\end{array}\right).
  \]

 We have the following commutation relations:
 \begin{equation*}
 \begin{split}
 \left[L_z,P_x\right]&=-P_y,\qquad\left[C,A\right]=-B\\
 \left[L_z,P_y\right]&=P_x,\qquad\left[C,B\right]=A\\ 
  \left[P_x,P_y\right]&=0,\qquad\left[A,B\right]=0.
 \end{split}
 \end{equation*}
 We introduce
 the polar coordinates $x=r\cos(\varphi),\,y=r\sin(\varphi)$ and the operators
 $$L_z^1=\dfrac{1}{i}\dfrac{\partial}{\partial\varphi},P_{\pm}=e^{\pm i\varphi}(\pm\dfrac{\partial}{\partial r}+\dfrac{i}{r}\dfrac{\partial}{\partial\varphi}).$$
We suppose that $P_{\pm}$ act as Ladder operators on a basis $(\varphi_n)_{n\in\N}$ of a separable Hilbert space $V$. That is if $|n>$ is a basis function of $V$, then
 $$P_+|n>=-|n+1>,\quad P_-|n>=|n-1>, |n>\leftrightarrow\varphi_n.$$
 Let us also choose the basis functions $|n>$ to be eigenfunctions of $L_z$ with eigenvalue $n$: $L_z|n>=n|n>$. The equation
 $$-i\dfrac{\partial<r,\varphi|n>}{\partial\varphi}=n<r,\varphi |n>,\quad<r,\varphi |n>\leftrightarrow\varphi_n(r,\varphi)$$
 implies that $<r,\varphi |n>=J_n\left(r\right)e^{in\varphi}$ for some function $J_n\left(r\right)$. In the discrete case
 $$<r,\varphi|P_+P_- |n>:=P_+P_-(\varphi_n(r,\varphi))=-<r,\varphi|P_+ |n-1>=<r,\varphi|n>.$$  
  So that $<r,\varphi|n>$ is an eigenfunction of $P_+P_-$ with an eigenvalue equal to $1$. For the continuous case, we compute on a basis $|r,\varphi>$
  \begin{equation*}
  \begin{split}
  P_+P_-<r,\varphi|n>&=P_+e^{-i\varphi}(-\dfrac{\partial}{\partial r}+\dfrac{i}{r}\dfrac{\partial}{\partial\varphi})J_n\left(r\right)e^{in\varphi}\\
  &=(-J_n^{\prime\prime}\left(r\right)-\dfrac{1}{r}J^\prime_n\left(r\right)+\dfrac{n^2}{r^2}J_n\left(r\right))e^{in\varphi}=J_n\left(r\right)e^{in\varphi}.
  \end{split}
  \end{equation*}
  Thus these requirements imply that $J_n\left(r\right)$ satisfies the Bessel differential equation. Also the two relations
  \begin{equation}\label{5509}P_+<r,\varphi|n>=-<r,\varphi|n+1>\end{equation}
  \begin{equation}\label{5510}P_-<r,\varphi|n>=-<r,\varphi|n-1>\end{equation}
  with $P_\pm=e^{\pm i\varphi}(\pm\dfrac{\partial}{\partial r}+\dfrac{i}{r}\dfrac{\partial}{\partial \varphi})$ and $<r,\varphi|n>=J_n\left(r\right)e^{in\varphi}$. The relation \eqref{5509} give the fundamental recurrence relation for the Bessel functions. Indeed
  $$e^{i\varphi}(\dfrac{\partial}{\partial r}+\dfrac{i}{r}\dfrac{\partial}{\partial \varphi})J_n\left(r\right)e^{in\varphi}=-J_{n+1}\left(r\right)e^{i(n+1)\varphi}$$
 or 
 \begin{equation}
 \label{5511}
 J_n^\prime\left(r\right)-\dfrac{n}{r}J_n\left(r\right)=J_{n+1}\left(r\right).\end{equation}
 Similarly using \eqref{5510} instead of \eqref{5509} we get
 \begin{equation}
 \label{5512}
 J_n^\prime\left(r\right)-\dfrac{n}{r}J_n\left(r\right)=J_{n-1}\left(r\right).\end{equation}
 If we add the identities \eqref{5511} and \eqref{5512} we obtain the classical relation
 $$\dfrac{2n}{r}J_n\left(r\right)=J_{n-1}\left(r\right)+J_{n+1}\left(r\right).$$
 Finally if we take their difference we get
  $$2J_n^\prime\left(r\right)=J_{n-1}\left(r\right)-J_{n+1}\left(r\right).$$
  \section{From Frobenius determinants to non-euclidian geometry}
  A particular example of Frobenius determinant is the determinant of a circulant matrices. Let $K$ be a field and $Circ_n$ the algebra of circulant matrices of order $n$ with coefficients in $K$, then as algebras we have the isomorphisms
  $$Circ_n\simeq K^n\simeq K[X]/(X^n-1).$$
  If $K=\Q$ and we write
  $$X^n-1=\displaystyle\prod_{d|n}\phi_d(X),$$
   where $\phi_m(X)$ is the associated $m$-th cyclotomic polynomial. The M\"obius function $\mu(n)$ is defined by the conditions: $\mu(1)= 1,\;\mu(n)= 0$ if $n$ is divisible by the square of a prime and $\mu(n)= (-1)^r  $ if $n=p_1p_2\cdots p_r$ for disctinct primes $p_i$. The $m$-th cyclotomic polynomial is given by
  \[\phi_m(X)= \prod_{d\vert m}(X^d-1)^{\mu(m/d)}.\]
  We then get the identification
$$=\displaystyle\bigoplus_{d|n}\Q(\zeta_d)$$
as étale algebras and where $\Q(\zeta_d)$ is the cyclotomic extension corresponding to the $d$-th primitive roots of unity. In case $K=\C$, $Circ_n$ is also isomorphic as étale algebras to
$$\C\oplus\ldots\oplus\C, $$
$n$ times.

 To give an explicit example of the link of $Circ_n$ with geometry, we fix $n=2$. The general case is developed in \cite{ssvv}. In this case we have two determinants
 $$\left|\begin{array}{cc}x & iy \\iy & x\end{array}\right|=x^2+y^2$$
 and
 $$\left|\begin{array}{cc}x & y \\y & x\end{array}\right|=x^2-y^2$$
 to which we associate
 \begin{itemize}
 \item The Laplace equation, the euclidian metric $ds^2=dx^2+dy^2$ and the circles $\{x^2+y^2=r^2\}$.
 \item The wave equation, the pseudo-metric $ds^2=dx^2-dy^2$ and the hyperbolas $\{x^2-y^2=r^2\}$.
 \end{itemize}
 To these metrics we attach (resp.) the distance functions: If $M_1=(x_1,y_1)$, $M_2=(x_2,y_2)$, we have
 $$d_C(M_1,M_2)=\displaystyle{(x_1-x_2)^2+(y_1-y_2)^2}$$
 respectively
 $$d_H(M_1,M_2)=\displaystyle{(x_1-x_2)^2-(y_1-y_2)^2}.$$
 $d_H$ is clearly not an euclidian metric but it has a very rich group theoretical properties. The pseudo-rotation group is the group of transformations which leaves the quadratic form in $\R^n$
 $$Q(x)=x_1^2+\ldots+x_p^2-x_{p+1}^2-\ldots-x_{n}^2,\,0\less p\less n$$
 invariant. This group is named the Lorenz group when $n=4$ and $p=3$ and in general it is denoted by $O_{p,n-p}$. The simplest situation is when $n=2$ and $p=1$ so that the group $O_{1,1}$ leaves the quadratic form $x^2-y^2$ invariant. There is a distinguished one parameter subgroup of $O_{1,1}$, namely $\{t\mapsto h(t)\}$ with 
 $$h(t)=\left(\begin{array}{cc}\cosh(t) & \sinh(t) \\\sinh(t) & \cosh(t)\end{array}\right).$$
 We denote it $SO^+_{1,1}$. It is the component of the identity in $O_{1,1}$. Moreover $O_{1,1}$ contains also $\eta=\left(\begin{array}{cc}1 & 0 \\0 & -1\end{array}\right)$
 and we have the decomposition in connected components
 $$O_{1,1}=SO^+_{1,1}\bigcup\eta SO^+_{1,1}.$$  
 A generator of that one parameter group is
 $$L=x\dfrac{\partial}{\partial y}+y\dfrac{\partial}{\partial x}.$$
 If we make the change of variable $y=iv$, then $x^2-y^2=x^2+v^2$ and any transformation leaving $x^2-y^2$ invariant leaves $x^2+v^2$ also invariant.. The group of the quadratic form $x^2+v^2$ is $SO(2)$ with elements given by 
 $$\begin{cases}
x^\prime&=x\cos(\theta)+v\sin(\theta)\\
y^\prime&=-x\sin(\theta)+v\cos(\theta)
\end{cases}$$
and if $\alpha=i\theta$ we obtain
$$\begin{cases}
x^\prime&=x\cosh(\alpha)+y\sinh(\alpha)\\
y^\prime&=x\sinh(\alpha)+y\cosh(\alpha)
\end{cases} $$
so that $SO^+_{1,1}$ may be seen as a group of rotations through imaginary angles.

Macdonald functions $K_\nu$ are modified Bessel functions satisfying the differential equation
$$z^2u^{\prime\prime}+zu^\prime-(z^2+\nu^2)u=0.$$
They admit the following integral representation
$$\displaystyle K_\nu(z)=\dfrac{\pi}{\Gamma(1/2+\nu)}(z/2)^{\nu}\int_1^\infty e^{-tz}(t^2-1)^{\nu-1/2}dt,\,|\arg(z)|\less\frac{\pi}{2}, \Re\nu\bg -1/2.$$
Useful properties that we will discuss in large are 
$$-2K_\nu^\prime(z)=K_{\nu-1}(z)+K_{\nu+1}(z)$$
$$-2\nu/zK_\nu(z)=K_{\nu-1}(z)-K_{\nu+1}(z).$$
For $\nu=n$ an integer let us give a Lie theoretical interpretation of the Macdonald functions as we did in the previous section.

Pseudo-Rotations of $\R^3$ about the $z$ axis form a $1$-parameter subgroup
$$R(\varphi)=\left(\begin{array}{ccc}\cosh(\varphi) & \sinh(\varphi) & 0 \\\sinh(\varphi) & \cosh(\varphi) & 0 \\0 & 0 & 1\end{array}\right).$$
A general motion the pseudo-euclidean plane is
\begin{equation*}
\begin{split}
 x^\prime&=x\cosh(\varphi)+y\sinh{\varphi}+a\\
 x^\prime&=x\sinh(\varphi)+y\cosh{\varphi}+b
 \end{split}
 \end{equation*}
 $a,b\in\R,0\leqslant\theta\less 2\pi$. It is the action of the semi-direct product
 \begin{equation*}
 \R^2\rtimes H^1
 \end{equation*}
where $H^1$ is the hyperbola $\{x^2-y^2=1\}$, on the plane with coordinates $^t(x,y,1)$. Every element of  $ \R^2\rtimes H^1$
 can be written $g=(a,b;\varphi)\mapsto S(g)=\left(\begin{array}{ccc}\cosh(\varphi) & -\sinh(\varphi) & a \\\sinh(\varphi) & \cosh(\varphi) & b \\0 & 0 & 1\end{array}\right).$

 As Lie algebra of the group, we obtain
 $$a_1=\left(\begin{array}{ccc}0 & 0 & 1\\0 & 0 & 0 \\0 & 0 & 0\end{array}\right),\quad a_2=\left(\begin{array}{ccc}0 & 0 & 0\\0 & 0 & 1 \\0 & 0 & 0\end{array}\right),\quad a_3=\left(\begin{array}{ccc}0 & 1 & 0\\1 & 0 & 0 \\0 & 0 & 0\end{array}\right).$$
 The infinitesimal generators in the space of functions defined on the plane are 
 $$P_x^1=\dfrac{\partial}{\partial x},\quad P_y^1=\dfrac{\partial}{\partial y},\quad L_z^2=x\dfrac{\partial}{\partial y}+y\dfrac{\partial}{\partial x}.$$

 The commutation relations below hold:
 \begin{equation*}
 \begin{split}
 \left[L_z^2,P_x^1\right]&=P_y^1,\qquad\left[a_3,a_1\right]=a_2\\
 \left[L_z^2,P_y^1\right]&=P_x^1,\qquad\left[a_3,a_2\right]=a_1\\ 
  \left[P_x^1,P_y^1\right]&=0,\qquad\left[a_1,a_2\right]=0.
 \end{split}
 \end{equation*}
 Let us consider the polar coordinates $x=r\cosh(\varphi),\,y=r\sinh(\varphi)$ corresponding to the hyperbola. We introduce
 $$L_z^3=\dfrac{\partial}{\partial\varphi},P_{\pm}^1=e^{\pm \varphi}(\pm\dfrac{\partial}{\partial r}-\dfrac{1}{r}\dfrac{\partial}{\partial\varphi}).$$
 We suppose that $P_{\pm}^1$ act as Ladder operators on a basis $(\varphi_n)_{n\in\N}$ of a separable Hilbert space $V$. That is if $|n>$ is a basis function of $V$, then
 $$P_+^1|n>=-|n+1>,\quad P_-^1|n>=-|n-1>, |n>\leftrightarrow\varphi_n.$$
 Moreover choose the basis functions $|n>$ to be eigenfunctions of $L_z^4$ with eigenvalue $-n$: $L_z|n>=-n|n>$. The equation
 $$\dfrac{\partial<r,\varphi|n>}{\partial\varphi}=-n<r,\varphi |n>,\quad<r,\varphi |n>\leftrightarrow\varphi_n(r,\varphi)$$
 implies that $<r,\varphi |n>=J_n\left(r\right)e^{-n\varphi}$ for some function $K_n\left(r\right)$. In the discrete case
 $$<r,\varphi|P_+^1P_-^1 |n>:=P_+^1P_-^1(\varphi_n(r,\varphi))=-<r,\varphi|n>.$$  
  So that $<r,\varphi|n>$ is an eigenfunction of $P_+^1P_-^1$ with an eigenvalue equal to $-1$. For the continuous case, we compute on a basis $|r,\varphi>$
  \begin{equation*}
  \begin{split}
  P_+^1P_-^1<r,\varphi|n>&=P_+^1e^{-\varphi}(-\dfrac{\partial}{\partial r}-\dfrac{1}{r}\dfrac{\partial}{\partial\varphi})K_n\left(r\right)e^{-n\varphi}\\
 &=e^{\varphi}(\dfrac{\partial}{\partial r}-\dfrac{1}{r}\dfrac{\partial}{\partial\varphi})e^{-\varphi}(-\dfrac{\partial}{\partial r}-\dfrac{1}{r}\dfrac{\partial}{\partial\varphi})K_n\left(r\right)e^{-n\varphi}\\
  &=(-K_n^{\prime\prime}\left(r\right)-\dfrac{1}{r}K_n^\prime\left(r\right)+\dfrac{n^2}{r^2}) e^{-n\varphi}\\&=-K_n\left(r\right)e^{-n\varphi}.
  \end{split}
  \end{equation*}
  Thus these requirements imply that $K_n\left(r\right)$ satisfies the Bessel-Macdonald differential equation. Also the two relations below hold
  \begin{equation}\label{5609}P_+^1<r,\varphi|n>=-<r,\varphi|n-1>\end{equation}
  \begin{equation}\label{5610}P_-^1<r,\varphi|n>=<r,\varphi|n+1>\end{equation}
  with $P_\pm^1=e^{\pm\varphi}(\pm\dfrac{\partial}{\partial r}-\dfrac{1}{r}\dfrac{\partial}{\partial \varphi})$ and $<r,\varphi|n>=K_n\left(r\right)e^{-n\varphi}$. The relation \eqref{5609} gives
  $$e^{\varphi}(\dfrac{\partial}{\partial r}-\dfrac{1}{r}\dfrac{\partial}{\partial \varphi})K_n\left(r\right)e^{-n\varphi}=-K_{n-1}\left(r\right)e^{-(n-1)\varphi}$$
 or 
 \begin{equation}
 \label{5611}
 K_n^\prime\left(r\right)+\dfrac{n}{r}K_n\left(r\right)=-K_{n-1}\left(r\right).\end{equation}
 Similarly using \eqref{5610} instead of \eqref{5609} we get
 \begin{equation}
 \label{5612}
 -K_n^\prime\left(r\right)+\dfrac{n}{r}K_n\left(r\right)=K_{n+1}\left(r\right).\end{equation}
 If we add the identities \eqref{5611} and \eqref{5612} we obtain the classical relation
 $$\dfrac{2n}{r}K_n\left(r\right)=-K_{n-1}\left(r\right)+K_{n+1}\left(r\right).$$
 Finally if we make the difference between them we get
  $$2K_n^\prime\left(r\right)=-K_{n-1}\left(r\right)-K_{n+1}\left(r\right).$$

\end{document}